\newcommand{\ubar}[1]{\underaccent{\bar}{#1}} 
\theoremstyle{plain}
\newtheorem{theorem}{Theorem}[section]
\newtheorem{lemma}[theorem]{Lemma}
\newtheorem{prop}[theorem]{Proposition}
\theoremstyle{definition}
\newtheorem{rem}[theorem]{Remark}
\newtheorem{definition}[theorem]{Definition}
\numberwithin{equation}{section} 
\def\endalpheq{\setcounter{equation}{\value{tempcou}}\global\@ignoretrue}
\newcommand{\onebr}[1]{\one_{\{#1\}}} 
\title{Stochastic Control of Dividends with a Drawdown Penalty}
\author{
        Kira Dudziak\footnote{University of Cologne, Department of Mathematics and Computer Science, Division of Mathematics, Cologne, Germany. E-mail: kira.dudziak@uni-koeln.de} \and Hanspeter Schmidli\footnote{University of Cologne, Department of Mathematics and Computer Science, Division of Mathematics, Cologne, Germany. E-mail: hanspeter.schmidli@uni-koeln.de}
}
\begin{document}

\maketitle

\begin{abstract}
\noindent
We consider a diffusion risk model where dividends are paid at rate $U(t)\in[0,u_0]$. We are interested in maximising the dividend payments under a drawdown constraint, that is, we penalise a drawdown size larger than a level $d>0$. We show that the optimal dividend rate $U(t)$ is either zero or the maximal rate $u_0$ and determine the optimal strategy. Moreover, we derive an explicit expression for the value function by solving a system of differential equations.
\end{abstract}
\medskip

\noindent {Keywords:} $\;$ {\sc drawdown; diffusion approximation; optimal
	dividends; Hamilton--Jacobi--Bellman equation} 

\noindent {Classification:} Primary 91B05; Secondary 91G05, 93E20, 60J60
\medskip

\section{Introduction} \label{section:introduction}
\subsection{Problem Formulation}
The distribution of dividends is considered an indicator of a company's
financial strength. Since dividends also function as returns, shareholders
will prefer higher dividend payments. Beyond that, an insurance company has a
special responsibilty towards its policyholders: In the event of a claim, they
rely on the insurer to meet his financial obligations. Financial stability is
therefore another important goal, in particular, requested by the regulator. In order to measure the stability of the surplus process, an object of interest is the size of the so-called \textit{drawdown}, that is the distance between the current capital and the last historical maximum. The challenge is to reconcile these two objectives, as dividend payments reduce capital and could lead to a larger drawdown.

\noindent
In the literature optimal dividends have been extensively studied. The
problem has been originally treated by De Finetti \cite{DeFinetti} and Gerber
\cite{Gerber69}. For an overview see also \cite{AzcMul} or
\cite{Schmidli}. The corresponding optimal strategies lead always to ruin
which could lead to problems with the supervisor. The goal of the insurer has
therefore also to be financial stability. A possibility to measure stability
could be to measure how much time the drawdown spends in a critical area. This
problem was considered by \cite{LVB} and \cite{Brinker2022}, see also
the references in the latter publications. If drawdowns are considered as only
measure, the insurer would never make profits. Alternatively one could in
addition reward an increase of the maximum, which also could be interpreted
as dividend payments according to a barrier dividend strategy. However, a
barrier dividend strategy will be hardly seen in the real world. In this paper
we thus allow more flexible strategies but stick to continuous dividend
payments at a bounded rate. 

We work on a complete probability space $(\Omega,?F,!!P)$ containing all the stochastic objects defined below. We assume that the surplus process is given by the diffusion approximation $x+\mu t + \sigma W(t)$, $t\geq 0$, with drift parameter $\mu$ and volatility parameter $\sigma$ and a standard Brownian motion $(W(t))_{t\geq 0}$. Let $?U$ be the set of adapted processes $(U(t))_{t\geq 0}$ with $0\leq U(t) \leq u_0$ and maximal dividend rate $u_0>0$. For a dividend strategy $U\in?U$ we consider the post-dividend surplus process $(X^U(t))_{t\geq 0}$ given by
\[
X^U(t) = x + \mu t + \sigma W(t) - \int_0^t U(s) \id s\;.
\]
Moreover, we define the running maximum of the process $(M_z^U(t))_{t\geq 0}$ by
\[
M_z^U(t) = \max\{z,\sup_{s\in[0,t]} X^U(s)\}
\]
and the drawdown process $(\Delta_z^U(t))_{t\geq 0}$ by
\[
\Delta_z^U(t) = M_z^U(t)- X^U(t)\;.
\]
Thus, we allow a past maximum $z$ at time zero. Note that the drawdown size is independent of the initial capital $x$. We therefore assume $x=0$. Our aim is now to maximise dividend payments while minimising the time in which the drawdown size exceeds a critical level $d>0$. This idea is represented by the \textit{value function}
\begin{equation}
v(z) = \sup_{U\in?U} v^U(z) = \sup_{U\in?U} \E\bigl[\beta\int_0^\infty \e^{-r t}U(t) \id t - \int_0^\infty \e^{-r t}\onebr{\Delta_z^U(t)>d} \id t\bigr]\;, \quad z\geq 0\;, \label{valuefunction}
\end{equation}
where the size of the weight parameter $\beta>0$ indicates the significance of dividend payments relative to the drawdown penalty. We include a preference factor $r>0$ such that present dividend payments $U(t)$ and critical drawdowns $\Delta(t)>d$ have a higher weight than those far in the future.
\subsection{First Results and HJB equation}
We have the following basic properties of the value function.
\begin{lemma} \label{lemma:propertiesofv}
The value function $v$ is decreasing and bounded by $(\beta u_0-1)/r\leq v(z) \leq \beta u_0/r$ and converges to its lower bound as $z\to\infty$.
\end{lemma}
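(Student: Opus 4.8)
The plan is to settle the two bounds and the monotonicity by elementary pathwise arguments, and to spend the real effort on the limit. For the upper bound, observe that for every $U\in?U$ the penalty integrand $\e^{-rt}\onebr{\Delta_z^U(t)>d}$ is nonnegative and $U(t)\le u_0$, so $v^U(z)\le\beta u_0\int_0^\infty\e^{-rt}\,\id t=\beta u_0/r$, and taking the supremum over $U$ gives $v(z)\le\beta u_0/r$. For the lower bound, test with the admissible constant control $U\equiv u_0$ and bound the indicator by $1$: $v(z)\ge v^{u_0}(z)\ge\beta u_0/r-\int_0^\infty\e^{-rt}\,\id t=(\beta u_0-1)/r$.

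For monotonicity, fix $0\le z_1\le z_2$ and an arbitrary $U\in?U$. Since $M_{z_1}^U(t)=\max\{z_1,\sup_{s\le t}X^U(s)\}\le\max\{z_2,\sup_{s\le t}X^U(s)\}=M_{z_2}^U(t)$ on every path, we get $\Delta_{z_1}^U(t)\le\Delta_{z_2}^U(t)$, hence $\onebr{\Delta_{z_1}^U(t)>d}\le\onebr{\Delta_{z_2}^U(t)>d}$ for all $t$, so $v^U(z_1)\ge v^U(z_2)$. As this holds for each $U$, $v(z_1)=\sup_U v^U(z_1)\ge v^U(z_1)\ge v^U(z_2)$ for every $U$, and taking the supremum on the right yields $v(z_1)\ge v(z_2)$.

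For the limit, $v$ is decreasing and bounded below, hence converges to $\inf_{z\ge0}v(z)\ge(\beta u_0-1)/r$, so it remains to produce an upper bound on $v(z)$ that degrades to $(\beta u_0-1)/r$ as $z\to\infty$. The key point is that dividends only pull the surplus down: $X^U(t)\le Y(t):=\mu t+\sigma W(t)$ for all $t$ and all $U$. Put $\tau_a=\inf\{t\ge0:Y(t)\ge a\}$, with $\tau_a=\infty$ if $Y$ never reaches $a$. For $z>d$ and $t<\tau_{z-d}$ one has $X^U(t)\le Y(t)<z-d$ while $M_z^U(t)\ge z$, so $\Delta_z^U(t)>d$; integrating gives
\[
\int_0^\infty\e^{-rt}\onebr{\Delta_z^U(t)>d}\,\id t\ \ge\ \int_0^{\tau_{z-d}}\e^{-rt}\,\id t\ =\ \frac{1-\e^{-r\tau_{z-d}}}{r}\,.
\]
Hence $v^U(z)\le(\beta u_0-1)/r+\frac1r\,\E[\e^{-r\tau_{z-d}}]$ for every $U$, and therefore the same bound holds for $v(z)$. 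By the classical first-passage Laplace transform of Brownian motion with drift, $\E[\e^{-r\tau_a}]=\exp\!\bigl(-a(\sqrt{\mu^2+2r\sigma^2}-\mu)/\sigma^2\bigr)\to0$ as $a\to\infty$ (alternatively, $\tau_a\uparrow\infty$ almost surely and dominated convergence), so $\lim_{z\to\infty}v(z)=(\beta u_0-1)/r$.

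I expect the limit to be the only nontrivial part, and within it the crux is obtaining a lower bound on the discounted time spent in the critical drawdown region that is \emph{uniform over all admissible strategies}. The pathwise domination $X^U\le Y$ is exactly what achieves this: it collapses the problem to the first-passage behaviour of the single, strategy-independent process $Y$, and — as a bonus — requires no sign assumption on $\mu$ (on the event that $Y$ never attains $a$ we simply set $\tau_a=\infty$ and $\e^{-r\tau_a}=0$).
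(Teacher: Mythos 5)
Your proof is correct, and for the bounds and the monotonicity it coincides with the paper's argument (pathwise comparison of drawdowns for the monotonicity, dropping the penalty for the upper bound, testing with $U\equiv u_0$ for the lower bound). Where you genuinely diverge is the limit: the paper does not prove $\lim_{z\to\infty}v(z)=(\beta u_0-1)/r$ inside the lemma at all, but defers it to Section~4, where it is read off from the explicit form \eqref{gupper} of the constructed solution $\bar g_C(z)=\frac{\beta u_0-1}{r}+\frac{\beta}{\theta_2(u_0)}\e^{-\theta_2(u_0)(z-z_g^C)}$ once the verification theorem identifies it with $v$. You instead give a direct, strategy-uniform probabilistic bound: since $U\ge0$ forces $X^U(t)\le Y(t)=\mu t+\sigma W(t)$ while $M_z^U(t)\ge z$, every admissible strategy incurs the full penalty at least up to the first passage of the uncontrolled process $Y$ over $z-d$, giving $v(z)\le \frac{\beta u_0-1}{r}+\frac1r\E[\e^{-r\tau_{z-d}}]$ with $\E[\e^{-r\tau_a}]=\e^{-\theta_2(0)a}\to0$. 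This is self-contained (it does not rely on the later existence/construction results, so the lemma stands on its own), and it even yields an explicit exponential upper estimate $v(z)-(\beta u_0-1)/r\le\frac1r\e^{-\theta_2(0)(z-d)}$, albeit with the cruder rate $\theta_2(0)$ rather than the true rate $\theta_2(u_0)$ visible from the paper's explicit solution; the paper's route costs nothing extra since the explicit formula is needed anyway, but logically it makes the lemma's last claim forward-dependent, which your argument avoids.
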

\begin{proof}
Let $U\in?U$ be an admissible dividend strategy. Since
$\Delta_y^{U}(t)\leq \Delta_z^{U}(t)$ for all $y\leq z$ and $t\geq 0$,
it follows that $v^{U}(z)$ is a decreasing function. This yields $v^{U}(z)\leq
v^{U}(y) \leq v(y)$. By taking the supremum over all admissible strategies
$U\in?U$ on the left side, we find $v(z)\leq v(y)$.
It is clear that $v(z)\leq \beta u_0/r$ by not penalizing the drawdowns. For the lower bound we note that 
\begin{equation*}
v(z)\geq v^{u_0}(z) = \E\bigl[\beta\int_0^\infty \e^{-r t}u_0 \id t - \int_0^\infty \e^{-r t}\onebr{\Delta_z^{u_0}(t)>d} \id t\bigr] 
\geq \frac{\beta u_0-1}{r}\;.
\end{equation*}
We will derive the limit $\lim_{z\to\infty} v(z) = (\beta u_0-1)/r$ directly
from the construction in Theorem~\ref{thm:drawdownsensitive} and \eqref{gupper}.
\end{proof}
We will later see that we can find an expression for the value
function and an optimal dividend strategy. The key tool for tackling this
problem will be the following \textit{Hamilton-Jacobi-Bellman (HJB) equation}:
\begin{equation}
\sup_{u\in[0,u_0]} ?A^{u}(h)(z) := -r h(z) -\mu h'(z) + \frac{\sigma^2}{2}h''(z) + \sup_{u\in[0,u_0]}\{(\beta+h'(z))u\} = \one_{\{z>d\}}\;. \label{HJB}
\end{equation}
Before we give a verification theorem, we have to define what we understand under a solution to \eqref{HJB}. 
\begin{definition}\label{def:solution}
A function $h:[0,\infty)\to[(\beta u_0-1)/r,\beta u_0/r]$ is called a solution to \eqref{HJB} if the following conditions are fulfilled:
\begin{enumerate}
\item $h$ is continuously differentiable on $[0,\infty)$, \label{def:sol:contdiff}
\item $h$ is twice continuously differentiable on $[0,d]$ and $(d,\infty)$, respectively, and 
\item $\sup_{u\in[0,u_0]} ?A^{u}(h)(z) = \one_{\{z>d\}}$ for all $z\geq d$. \label{def:sol:HJB}
\end{enumerate}
\end{definition}
\begin{rem}
Note that $h'(0)$ and $h''(0)$ refer to the first two derivatives from the
right and $h''(d)$ refers to the second derivative from the left. Since
$\one_{\{z>d\}}$ is discontinuous at $z=d$, the same must also apply to the
second derivative of $h$.
\hfill$\blacksquare$
\end{rem}
Now we can state the Verification Theorem.
\begin{theorem}[Verification Theorem] \label{verification}
Let $h:[0,\infty)\to[(\beta u_0-1)/r,\beta u_0/r]$ be a decreasing solution to
\eqref{HJB} in the sense of Definition {\rm \ref{def:solution}}. Then $h(z)\geq v^U(z)$ for all $z\in[0,\infty)$ and every strategy $U\in?U$. Moreover, if
$h'(0)=0$ and if $$u^{*}(z)=
u_0\one_{\{h'(z) \geq -\beta\}}$$ defines an admissible strategy via $U^{*}(t)=u^{*}(\Delta_z^{U^{*}}(t))$, $t\geq0$, then
$h(z)=v(z)=v^{U^{*}}(z)=\sup_{U\in?U}v^{U}(z)$ for all $z\geq 0$.
\end{theorem}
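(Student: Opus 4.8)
The plan is to run the classical stochastic-control verification argument based on It\^o's formula, adapted to the fact that the relevant state process is the drawdown $\Delta_z^U$ rather than the surplus. First I would show $h(z)\ge v^U(z)$ for an arbitrary $U\in\mathcal U$. Fix $U$ and write $D(t)=\Delta_z^U(t)$. Since $h$ is $C^1$ on $[0,\infty)$ and $C^2$ off $d$, and since $D$ is a reflected process whose increments are $dD(t)=-\mu\,dt-\sigma\,dW(t)+U(t)\,dt+dL(t)$, where $L$ is the local-time-type increasing process that pushes $D$ down at $0$ (i.e.\ $M_z^U$ increases only when $D=0$), I would apply It\^o's formula to $e^{-rt}h(D(t))$. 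The term multiplying $dL(t)$ is $e^{-rt}h'(0)$; since $h$ is decreasing we have $h'(0)\le 0$, so this contributes a nonpositive amount to the expectation (this is where the one-sided reflection is handled — note for the optimality half we will need $h'(0)=0$ so this term vanishes). The $dt$-terms give $e^{-rt}\mathcal A^{U(t)}(h)(D(t))\le e^{-rt}\bigl(\mathcal A^{U(t)}(h)(D(t))-(\beta+h'(D(t)))U(t)\bigr)+e^{-rt}(\beta+h'(D(t)))U(t)$; using that $\sup_{u}\mathcal A^u(h)=\mathbf 1_{\{z>d\}}$ pointwise for $z\ge d$, and checking separately that $\mathcal A^u(h)(z)\le 0=\mathbf 1_{\{z>d\}}$ also holds for $z\in[0,d]$ (this needs a small argument — see below), one gets the pointwise drift bound $\mathcal A^{U(t)}(h)(D(t))\le \mathbf 1_{\{D(t)>d\}}$. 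Rearranging It\^o and taking expectations over $[0,T]$, the stochastic integral is a martingale (as $h'$ is bounded on the compact range of values and $\sigma$ constant), so
\[
\E[e^{-rT}h(D(T))]\le h(z)-\E\Bigl[\int_0^T e^{-rt}\bigl(\beta U(t)-\mathbf 1_{\{D(t)>d\}}\bigr)\,dt\Bigr].
\]
Since $h$ is bounded, $\E[e^{-rT}h(D(T))]\to 0$ as $T\to\infty$; letting $T\to\infty$ and using dominated/monotone convergence yields $h(z)\ge v^U(z)$, and taking the supremum gives $h(z)\ge v(z)$.

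For the second part I would verify that $U^*(t)=u^*(\Delta_z^{U^*}(t))$, once it is assumed to define an admissible (adapted, $[0,u_0]$-valued) strategy, actually attains equality throughout the chain above. With $D^*(t)=\Delta_z^{U^*}(t)$, the choice $u^*(z)=u_0\mathbf 1_{\{h'(z)\ge-\beta\}}$ is precisely a maximiser of $u\mapsto(\beta+h'(z))u$ on $[0,u_0]$, so $(\beta+h'(D^*(t)))U^*(t)=\sup_{u\in[0,u_0]}\{(\beta+h'(D^*(t)))u\}$ for all $t$; hence $\mathcal A^{U^*(t)}(h)(D^*(t))=\sup_u\mathcal A^u(h)(D^*(t))$, which equals $\mathbf 1_{\{D^*(t)>d\}}$ for $D^*(t)\ge d$. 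For $D^*(t)<d$ one must still argue equality, i.e.\ that $\sup_u\mathcal A^u(h)(z)=0$ on $[0,d)$; this should follow from the structure forced on $h$ on $[0,d]$ by Definition~\ref{def:solution} together with $h'(0)=0$ and monotonicity (in the explicit construction of Theorem~\ref{thm:drawdownsensitive} it will hold by design, but here I would either add it as part of what "solution" entails or derive it). With $h'(0)=0$ the local-time term vanishes identically, so every inequality in the derivation is an equality, giving $\E[e^{-rT}h(D^*(T))]=h(z)-\E[\int_0^T e^{-rt}(\beta U^*(t)-\mathbf 1_{\{D^*(t)>d\}})\,dt]$; letting $T\to\infty$ gives $h(z)=v^{U^*}(z)$. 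Combined with $h\ge v\ge v^{U^*}$ from the first part, this forces $h=v=v^{U^*}=\sup_U v^U$.

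The technical points I would be most careful about are: (i) the correct It\^o/Tanaka decomposition of the reflected drawdown process, in particular identifying the pushing term at $0$ and confirming that the $C^1$-pasting at $z=d$ (guaranteed by Definition~\ref{def:solution}\ref{def:sol:contdiff}) together with $C^2$ on each side is enough to apply the change-of-variables formula across the kink — a generalized It\^o formula for $C^1$ functions with absolutely continuous derivative, or a smoothing/approximation argument, handles this; (ii) the behaviour of $h$ on $[0,d)$, where the HJB equality is not imposed by Definition~\ref{def:solution} (only the inequality direction is needed for the verification lower bound, and equality is needed on the support of $D^*$ for optimality) — I expect this to be the main obstacle and would resolve it by showing the candidate $h$ satisfies $\mathcal A^u(h)\le 0$ there and that $D^*$ together with $h'(0)=0$ makes the drift term exactly $0$ wherever $D^*$ actually spends time; (iii) justifying the martingale property of $\int_0^\cdot e^{-rt}\sigma h'(D(t))\,dW(t)$ and the limit $\E[e^{-rT}h(D(T))]\to 0$, both immediate since $h$ and $h'$ are bounded on the compact interval $[(\beta u_0-1)/r,\beta u_0/r]$ and $[0,\infty)$ respectively (the latter from $C^1$ plus the HJB structure forcing $h'$ bounded); and (iv) checking admissibility of $U^*$ is consistently treated as a hypothesis, so I would not need to prove it, only to use adaptedness of $t\mapsto\Delta_z^{U^*}(t)$ so the stochastic integrals are well defined.
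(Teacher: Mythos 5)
Your proposal is correct and follows essentially the same route as the paper: apply (a generalised) It\^o formula to $\mathrm{e}^{-rt}h(\Delta_z^U(t))$, drop the reflection term $h'(0)\,\mathrm{d}M_z^U\leq 0$, bound the drift by the HJB equation \eqref{HJB}, use boundedness of $h'$ for the martingale property of the stochastic integral, let $t\to\infty$, and obtain equality for $U^{*}$ because $u^{*}$ attains the supremum in \eqref{HJB} and $h'(0)=0$ kills the reflection term. The two points you hedge on are exactly what the paper supplies: boundedness of $h'$ is derived in two lines from \eqref{HJB} and the boundedness of $h$ (if $h'<-\mu^{-1}(1+\beta u_0)$ then $h''<0$, so $h'$ would decrease forever), and the HJB equality is meant to hold on all of $[0,\infty)$ (the ``$z\geq d$'' in Definition~\ref{def:solution} should be read as $z\geq 0$, consistent with \eqref{problem1}--\eqref{problem2}), so no separate argument on $[0,d)$ is required.
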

\begin{proof}
We first claim that $h'(z)$ is bounded. We thus only have to consider the case
$h'(z) < -\beta$ where the supremum is taken for $u=0$. For such a $z$,
\[ \frac{\sigma^2}2 h''(z) = \one_{\{z>d\}} + r h(z) + \mu h'(z)\;.\]
This is strictly negative if $h'(z) < -\mu^{-1}(1 + \beta u_0)$. In the
latter case, $h'(z)$ is strictly decreasing, contradicting the assumption that $h(z)$ is
bounded. Thus, $h'(z)$ has to be bounded. In particular, the stochastic
integral below will be a martingale. Due to the discontinuity of $h''$ in
$z=d$, we have to use a generalised version of It\^o's formula: By applying
Theorem 2.1 of \cite{Peskir} for $F(t,z)=\e^{-rt}h(z)$ and $b(t)= d$, $t,z\geq
0$, to the continuous semimartingale $((t,\Delta_z^{U}(t)))_{t\geq 0}$, we find
\begin{eqnarray}
\e^{-rt}h(\Delta_z^{U}(t))&=&h(z)+\int_0^t\e^{-rs}h'(0) \id M_z^{U}(s)-\sigma\int_0^t\e^{-rs}h'(\Delta_z^{U}(s)) \id W(s) \nonumber\\
&\hspace{1cm}& + \int_0^t \e^{-rs}[ -rh(\Delta_z^{U}(s))-\mu
h'(\Delta_z^{U}(s))+U(s)h'(\Delta_z^{U}(s))+\frac{\sigma^2}{2}h''(\Delta_z^{U}(s))
\one_{\{\Delta_z^{U}(s)\neq d\}}] \id s \nonumber\\
&\leq& h(z)-\sigma\int_0^t\e^{-rs}h'(\Delta_z^{U}(s)) \id W_s +  \int_0^t \e^{-rs}[ \one_{\{\Delta_z^{U}(s)>d\}}-\beta U(s)] \id s\;, \label{eq:veri}
\end{eqnarray}
where we used $h'(0)\leq 0$ and the fact that $\{0\leq t<\infty:\Delta_z^U(t) = d\}$ has
$!!P$-almost surely Lebesgue measure zero. The latter can be seen by
\begin{eqnarray*}
\E[\lambda(\{0\leq t <\infty: \Delta_z^U(t) = d\})] = \E\bigl[\int_0^\infty \one_{\{\Delta_z^U(t) = d\}} \id t  \bigr] = \int_0^\infty !!P[\Delta_z^U(t)=d] \id t = 0
\end{eqnarray*}
which implies $\lambda(\{0\leq t<\infty:\Delta_z^U(t) = d\})=0$ $!!P$-almost surely.
Rearranging the terms in \eqref{eq:veri} and taking the expected value yields
\begin{eqnarray*}
h(z) \geq \E[\e^{-rt}h(\Delta_z^{U}(t))] + \E\bigl[ \int_0^t \e^{-rs}( \beta U(s)-\one_{\{\Delta_z^{U}(s)>d\}}) \id s\bigl]\;.
\end{eqnarray*}
By bounded convergence we find as $t\to\infty$
\begin{equation*}
h(z) \geq \E\bigl[\beta\int_0^\infty \e^{-r t}U(t) \id t - \int_0^\infty \e^{-r t}\onebr{\Delta_z^U(t)>d} \id t \bigr] = v^{U}(z)\;.
\end{equation*}
If $h$ is a solution fulfilling $h'(0)=0$,
we conclude from the steps above for the strategy $U^{*}$ that $h(z) = v^{U^{*}}(z) \le \sup_{u\in?U}v^{U}(z) = v(z)\;.$
\end{proof}
\section{Properties of  the Solution} \label{section:propertiesofv}
\subsection{System of Differential Equations}
Because of the discontinuity of the function $\one_{\{z>d\}}$ in the HJB equation \eqref{HJB}, we have to consider the cases $[0,d]$ and $(d,\infty)$ separately. We introduce the notation $h(z)=f(z)\one_{[0,d]}(z)+g(z)\one_{(d,\infty)}(z)$. Then we can rewrite \eqref{HJB} as system of equations
\begin{eqnarray}
-r f(z) -\mu f'(z) + \frac{\sigma^2}{2} f''(z) + \sup_{u\in[0,u_0]}\{(\beta+f'(z))u\} = 0\;, \qquad &z\in[0,d]&\,, \\
-r g(z) -\mu g'(z) + \frac{\sigma^2}{2} g''(z) + \sup_{u\in[0,u_0]}\{(\beta+g'(z))u\} = 1\;, \qquad &z>d&\,.
\end{eqnarray}
We distinguish the areas $?C_1:=\{z\geq 0:h'(z)< -\beta\}$ and $?C_2:=\{z\geq 0:h'(z)\geq -\beta\}$. For $z\in?C_1$, \eqref{HJB} is of the form
\[
        -rh(z)-\mu h'(z) + \frac{\sigma^2}{2} h''(z) = \one_{\{z>d\}}
\]
and for $z\in?C_2$ of the form
\[
        -rh(z)-(\mu-u_0) h'(z) + \frac{\sigma^2}{2} h''(z) = \one_{\{z>d\}} - \beta u_0\;.
\]
Thus, we have to distinguish four different differential equations. The general solutions are given by
\begin{alpheq}\label{sol}
\begin{eqnarray}
\ubar{f}(z) &=&  A_1\e^{\theta_1(0)z} + B_1\e^{-\theta_2(0)z}\,, \quad z\in?C_1\cap[0,d]\,, \label{sol1a}\\
\bar{f}(z) &=& \frac{\beta u_0}{r} + A_2\e^{\theta_1(u_0)z} + B_2\e^{-\theta_2(u_0)z}\,, \quad z\in?C_2\cap[0,d]\,, \label{sol1b}\\
\ubar{g}(z) &=& -\frac{1}{r} + a_1\e^{\theta_1(0)z} + b_1\e^{-\theta_2(0)z}\,, \label{sol2a}\quad z\in?C_1\cap(d,\infty)\,, \\
\bar{g}(z) &=& \frac{\beta u_0-1}{r} + a_2\e^{\theta_1(u_0)z} + b_2\e^{-\theta_2(u_0)z}\,, \label{sol2b}\quad z\in?C_2\cap(d,\infty)\,,
\end{eqnarray}
\end{alpheq}
where
\begin{eqnarray*}
\theta_1(u) &=& \frac{\sqrt{(\mu-u)^2+2r\sigma^2}+(\mu-u)}{\sigma^2} > 0 \\
\noalign{\text{\rm and}}\\
\theta_2(u) &=& \frac{\sqrt{(\mu-u)^2+2r\sigma^2}-(\mu-u)}{\sigma^2} > 0
\end{eqnarray*}
are such that $\kappa\in\{\theta_1(u),-\theta_2(u)\}$ solves the equation
\[
-r - (\mu-u) \kappa +\frac{\sigma^2}{2} \kappa^2 = 0\;.
\]
In the next section, we will discuss more detailed properties of the solutions.

\subsection{Form of the Solution}

In view of Theorem \ref{verification}, we are looking for a decreasing solution. In particular, the solution will be strictly decreasing, as shown in the following
\begin{lemma} \label{lemma:strictlydecreasing}
If $h:[0,\infty)\to[(\beta u_0 -1)/r,\beta u_0/r]$ is a decreasing solution to \eqref{HJB}, then it is strictly decreasing.
\end{lemma}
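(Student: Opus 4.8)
The plan is to argue by contradiction. Assume $h$ is decreasing but not strictly decreasing; then $h$ takes some value $c$ on a nondegenerate interval, and by continuity and monotonicity the set $J := h^{-1}(\{c\})$ is a nondegenerate closed interval. On $\mathrm{int}(J)$ we have $h' \equiv 0$, hence $h'' \equiv 0$ on $\mathrm{int}(J)\setminus\{d\}$; since $\beta>0$ this means $h'=0>-\beta$, so $\mathrm{int}(J)\subseteq ?C_2$ and the supremum in \eqref{HJB} is attained at $u_0$. Evaluating \eqref{HJB} at any $z\in\mathrm{int}(J)\setminus\{d\}$ therefore gives $-rc+\beta u_0=\one_{\{z>d\}}$, which forces $c=\beta u_0/r$ when $z<d$ and $c=(\beta u_0-1)/r$ when $z>d$. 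In particular $\mathrm{int}(J)$ cannot meet both sides of $d$ (that would make $h$ discontinuous at $d$), so we are in one of two cases: $J\subseteq[0,d]$ with $c=\beta u_0/r$, or $J\subseteq[d,\infty)$ with $c=(\beta u_0-1)/r$.

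Suppose first $J\subseteq[0,d]$, so $h$ attains its upper bound $\beta u_0/r$ on $J$; since $h$ is decreasing it equals $\beta u_0/r$ on all of $[0,\sup J]$, and in particular $h(0)=\beta u_0/r$ and $h'(0)=0$. I would then run a bootstrap argument on $[0,d]$: at any point $z_0$ already known to satisfy $h(z_0)=\beta u_0/r$ and $h'(z_0)=0$, continuity gives $h'>-\beta$ near $z_0$, so on a neighbourhood $h$ solves the \emph{fixed} linear equation $\frac{\sigma^2}{2}h''=rh+(\mu-u_0)h'-\beta u_0$; the constant $\beta u_0/r$ is the unique solution with these initial data, so $h$ stays at $\beta u_0/r$ a little further. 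This propagates across all of $[0,d]$, so $h\equiv\beta u_0/r$ on $[0,d]$ and hence $h(d)=\beta u_0/r$, $h'(d)=0$. Now on a right neighbourhood of $d$ we still have $h'>-\beta$, so \eqref{HJB} reads $\frac{\sigma^2}{2}h''=1+rh+(\mu-u_0)h'-\beta u_0$, and letting $z\downarrow d$ gives $\frac{\sigma^2}{2}h''(d^{+})=1>0$. Thus $h''>0$ just to the right of $d$, so $h'$ is strictly increasing there starting from $h'(d)=0$, i.e.\ $h'>0$ on a right neighbourhood of $d$ --- contradicting that $h$ is decreasing.

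The case $J\subseteq[d,\infty)$ is handled symmetrically: $h$ attains its lower bound $(\beta u_0-1)/r$ at some point $z_0>d$ with $h'(z_0)=0$, hence by monotonicity $h\equiv(\beta u_0-1)/r$ on $[z_0,\infty)$, and the same bootstrap run backwards along $(d,\infty)$ --- using that $(\beta u_0-1)/r$ is the unique solution with constant initial data of $\frac{\sigma^2}{2}h''=1+rh+(\mu-u_0)h'-\beta u_0$ --- forces $h\equiv(\beta u_0-1)/r$ on all of $(d,\infty)$, so by continuity $h(d)=(\beta u_0-1)/r$ and $h'(d)=0$. Evaluating \eqref{HJB} at $z=d$ with the left second derivative $h''(d^{-})$ (as specified in the remark after Definition~\ref{def:solution}, still in $?C_2$) gives $\frac{\sigma^2}{2}h''(d^{-})=rh(d)-\beta u_0=-1$, so $h''<0$ on a left neighbourhood of $d$, whence $h'$ is strictly decreasing there and $h'(z)>h'(d)=0$ for $z$ slightly below $d$ --- again contradicting monotonicity. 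I expect the only real difficulty to be bookkeeping: one must check that near a point with $h'=0$ the HJB genuinely reduces to a fixed linear ODE (the branch only switches across $\{h'=-\beta\}$), so that ODE uniqueness applies, and one must read off $h''$ on the correct side of the discontinuity at $d$ in each of the two cases; the clean observation driving the proof is that the two candidate constants $\beta u_0/r$ and $(\beta u_0-1)/r$ are themselves exact solutions of the relevant linear equations.
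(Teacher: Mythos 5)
Your proposal is correct and follows essentially the same route as the paper's proof: assume constancy on an interval, use the HJB equation to pin the constant to $\beta u_0/r$ (left of $d$) or $(\beta u_0-1)/r$ (right of $d$), propagate the constancy up to $d$, and derive a contradiction with monotonicity just across $d$. The only differences are cosmetic --- you invoke ODE uniqueness where the paper plugs into the explicit general solutions \eqref{sol} to get vanishing coefficients, and you read the contradiction off the sign of $h''$ near $d$ (namely $\tfrac{\sigma^2}{2}h''(d^{+})=1$, resp.\ $\tfrac{\sigma^2}{2}h''(d^{-})=-1$) instead of the explicit formula for $h'$.
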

\begin{proof}
Assume that there exists an interval $[a,b]\subset[0,\infty)$ such that $h$ is
constant on $[a,b]$. First, we can assume $[a,b]\subset[0,d]$. We then
conclude from \eqref{HJB} that $h(z)=\beta u_0/r$ for all $z\in[a,b]$. Since
$h$ is decreasing and bounded from above by $h(z)\leq\beta u_0/r$, it must be
the case that $h(z)=\beta u_0/r$ for all $z\in[0,b]$. We can assume that we
had chosen the maximal possible $b$. If $b<d$, 
then the solution $h$ on $[b,b+\epsilon]$ is of the form \eqref{sol1b}, that is
\[
h(z) = \frac{\beta u_0}{r} + A_2\e^{\theta_1(u_0)z}+B_2\e^{-\theta_2(u_0)z}\;.
\]
By plugging in the initial conditions $h(b)=\beta u_0/r$ and $h'(b)=0$ we find
$A_2=B_2=0$ and therefore $h(z)=\beta u_0/r$ for all $z\in[0,b+\epsilon]$.
This contradicts the assumption that $b$ is maximal. Thus $h(z) = \beta u_0/r$ for all $z\in[0,d]$. For $\epsilon>0$ small, we extend the solution on $[d,d+\epsilon]$. Since $h'(d)=0$, the solution is of the form \eqref{sol2b}. By $h(d)=\beta u_0/r$ and $h'(d)=0$ we determine the constants $a_2$ and $b_2$ and find
\[
h(z) = \frac{\beta u_0-1}{r} + \frac{\theta_2(u_0)\e^{\theta_1(u_0)(z-d)}+\theta_1(u_0)\e^{-\theta_2(u_0)(z-d)}}{r(\theta_1(u_0)+\theta_2(u_0))}\;, \quad z\in[d,d+\epsilon]\;.
\]
But
\[
h'(z) = \frac{\theta_1(u_0)\theta_2(u_0)}{r(\theta_1(u_0)+\theta_2(u_0))}(\e^{\theta_1(u_0)(z-d)}-\e^{-\theta_2(u_0)(z-d)}) > 0\;,
\]
which contradicts the assumption that $h$ is decreasing. Analogously, we can show that if $h$ is constant on an interval $[a,b]\subset[d,\infty)$, then $h(z)=(\beta u_0 -1)/r$ for all $z\in[d,\infty)$ and the solution $h$ on a small interval $[d-\epsilon,d]$ is increasing, which again leads to a contradiction. This proves the assertion.
\end{proof}
Next, we show that the second derivative $h''(z)$ does not vanish on a
non-empty open interval:
\begin{lemma} \label{lemma:fnotlinear}
Let $h$ be a decreasing solution to \eqref{HJB}. Then there exists no non-empty interval $(a,b)\subset[0,\infty)$ such that $h''(z)=0$ for all $z\in(a,b)$.
\end{lemma}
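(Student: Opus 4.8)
The plan is to argue by contradiction, using that on any interval where $h$ is affine the relevant reduced form of \eqref{HJB} forces $h$ to be \emph{constant} there, which is excluded by Lemma~\ref{lemma:strictlydecreasing}. So suppose $h''(z)=0$ for every $z$ in some non-empty open interval $(a,b)\subseteq[0,\infty)$. Since $h$ is twice continuously differentiable on $[0,d]$ and on $(d,\infty)$ separately and $h''$ is discontinuous only at $z=d$, after shrinking $(a,b)$ we may assume $(a,b)\subseteq[0,d]$ or $(a,b)\subseteq(d,\infty)$. On $(a,b)$ the function $h$ is then affine, so $h'\equiv c$ for a constant $c$; consequently the whole interval lies in a single one of the regions $?C_1=\{h'<-\beta\}$ or $?C_2=\{h'\geq-\beta\}$, according to whether $c<-\beta$ or $c\geq-\beta$. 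Hence on $(a,b)$ the restriction of $h$ solves exactly one of the four constant-coefficient linear ODEs obtained from \eqref{HJB}, and therefore coincides there with the corresponding general solution \eqref{sol1a}, \eqref{sol1b}, \eqref{sol2a} or \eqref{sol2b}.

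Next I would differentiate that explicit form twice: in each of the four cases $h''(z)$ is a linear combination $A\theta_1(u)^2\e^{\theta_1(u)z}+B\theta_2(u)^2\e^{-\theta_2(u)z}$ of the two linearly independent exponentials $\e^{\theta_1(u)z}$ and $\e^{-\theta_2(u)z}$ (with $u=0$ or $u=u_0$, and $\theta_1(u),\theta_2(u)>0$). Since $h''\equiv0$ on the non-degenerate interval $(a,b)$, both coefficients must vanish, so $h$ is constant on $(a,b)$. This contradicts Lemma~\ref{lemma:strictlydecreasing}, which says a decreasing solution is strictly decreasing, and the claim follows. Equivalently, one may substitute the affine ansatz $h(z)=cz+e$, $h'(z)=c$, $h''(z)=0$ directly into the appropriate reduced equation and compare the coefficients of $z$: the left-hand side equals $-rcz$ plus a constant while the right-hand side is constant on $(a,b)$, forcing $c=0$ and hence again $h$ constant on $(a,b)$.

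I do not anticipate a serious obstacle. Once one observes that $h'$ is constant on $(a,b)$, the reduction to a single region $?C_i$ and a single side of $d$ is automatic, and the remainder is the elementary fact that a nonzero combination of $\e^{\theta_1 z}$ and $\e^{-\theta_2 z}$ cannot vanish on an interval. The only point to be slightly careful about is to conclude via strict monotonicity (Lemma~\ref{lemma:strictlydecreasing}) rather than trying to exclude by hand the forced constant values $\beta u_0/r$ (on $[0,d]$) or $(\beta u_0-1)/r$ (on $(d,\infty)$) — although that route also works, along the same lines as in the proof of Lemma~\ref{lemma:strictlydecreasing}, since those are precisely the endpoints of the admissible range.
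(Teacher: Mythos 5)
Your proof is correct and follows essentially the same route as the paper: reduce to $h'$ constant on $(a,b)$, conclude that $h$ must then be constant there, and contradict the strict monotonicity from Lemma~\ref{lemma:strictlydecreasing}. Your closing ``equivalently'' remark (substituting the affine ansatz into the reduced equation so that $-rh(z)$ must be constant) is precisely the paper's argument, while your main detour through the explicit general solutions \eqref{sol1a}--\eqref{sol2b} is a valid but slightly longer variant of the same idea.
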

\begin{proof}
We note that $h$ has to be strictly decreasing by Lemma \ref{lemma:strictlydecreasing}.
Assume that there exists a non-empty interval $(a,b)$ such that $h''(z)=0$ for
all $z\in(a,b)$. 
The HJB equation \eqref{HJB} is then of the form
\begin{equation}
-rh(z)-\mu h'(z) + \sup_{u\in[0,u_0]}\{(\beta+h'(z))u\} = \one_{\{z>d\}}\;. \label{eq:HJB:f''zero}
\end{equation}
Since $h'(z)$ is constant on the interval $(a,b)$, also $h(z)$ has to be
constant contradicting Lemma~\ref{lemma:strictlydecreasing}. This proves the
assertion.
\end{proof}
In order to construct a solution to \eqref{HJB}, we have to combine the different types of solutions. The following lemma is crucial, since it states that we can only glue the solutions together in a specific way.
\begin{lemma} \label{lemma:formofh}
If $h:[0,\infty)\to[(\beta u_0-1)/r,\beta u_0/r]$ is a decreasing solution to \eqref{HJB} with $h'(0)=0$, then it is of the form
\begin{equation*}
h(z) = \bar{f}(z)\one_{[0,z_f\wedge d]}(z) +\ubar{f}(z)\one_{(z_f\wedge d,d]}(z) +
\ubar{g}(z)\one_{(d,z_g\vee d]}(z) +\bar{g}(z)\one_{[z_g\vee d,\infty)}(z)\;,
\end{equation*}
where $z_f \in (0,d]$ and $z_g\in [d,\infty)$.
\end{lemma}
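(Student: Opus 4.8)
The plan is to pin down the set $?C_1=\{z\ge0:h'(z)<-\beta\}$ exactly and to show that it is a single interval $(z_f,z_g)$ with $z_f\in(0,d]$ and $z_g\in[d,\infty)$. Granting this, the four intervals $[0,z_f]$, $(z_f,d]$, $(d,z_g]$ and $(z_g,\infty)$ are precisely the regions $?C_2\cap[0,d]$, $?C_1\cap[0,d]$, $?C_1\cap(d,\infty)$ and $?C_2\cap(d,\infty)$ (with the two extreme pieces possibly empty), so on each of them $h$ solves the corresponding linear differential equation and therefore coincides with the associated general solution from \eqref{sol}; the free constants are then fixed by the continuity of $h$ and $h'$ at $d$, the continuity of $h$, $h'$ and $h''$ at $z_f$ and $z_g$ required by Definition~\ref{def:solution}, and the data $h(0)$ together with $h'(0)=0$.

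I would first handle $[0,d]$. Since $h'(0)=0>-\beta$ and $h'$ is continuous, a right neighbourhood of $0$ lies in $?C_2$, so there $h=\bar f$ as in \eqref{sol1b}. Substituting $h'(0)=0$ gives $A_2\theta_1(u_0)=B_2\theta_2(u_0)$, so $A_2$ and $B_2$ have a common sign; $h(0)\le\beta u_0/r$ rules out the case where both are positive, and $A_2=B_2=0$ is impossible because $h$ is strictly decreasing by Lemma~\ref{lemma:strictlydecreasing}. Hence $A_2,B_2<0$, so $\bar f''<0$ and $\bar f'$ decreases strictly from $0$. Set $z_f:=\inf\{z\in[0,d]:h'(z)<-\beta\}$, with $z_f:=d$ if this set is empty; then $z_f\in(0,d]$ and $h=\bar f$ on $[0,z_f]$. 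If $z_f<d$, I claim $h'<-\beta$ throughout $(z_f,d]$, so that $h=\ubar f$ there. Indeed, on $?C_1\cap[0,d]$ equation \eqref{HJB} reduces to $\frac{\sigma^2}{2}h''=rh+\mu h'$, so the continuity of $h''$ on $[0,d]$ together with $\bar f''(z_f)<0$ gives $h(z_f)<\mu\beta/r$; were $h'$ to return to $-\beta$ at some minimal $z_1\in(z_f,d]$, then $h''(z_1)\ge0$ (as $h'$ crosses $-\beta$ from below), forcing $h(z_1)\ge\mu\beta/r$, which contradicts $h(z_1)\le h(z_f)<\mu\beta/r$.

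I would then treat $(d,\infty)$ via two facts. First, $?C_1\cap(d,\infty)$ contains no unbounded interval: on such an interval $h=\ubar g$, and boundedness of $h$ forces $a_1=0$ in \eqref{sol2a}, so $h(z)\to-1/r<(\beta u_0-1)/r$ as $z\to\infty$, contradicting the lower bound on $h$. Second, $?C_1$ has no bounded connected component $(\alpha,\gamma)$ with $\alpha>d$: on such a component equation \eqref{HJB} reads $\frac{\sigma^2}{2}h''=rh+\mu h'+1$, and since $h'=-\beta$ at $\alpha$ and $\gamma$ with $h'$ crossing $-\beta$ downward at $\alpha$ and upward at $\gamma$, we get $h''(\alpha)\le0\le h''(\gamma)$, hence $h(\alpha)\le(\mu\beta-1)/r\le h(\gamma)$, impossible for the strictly decreasing $h$. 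Using the $[0,d]$-analysis (which gave $?C_1\cap[0,d]=(z_f,d]$ or $\emptyset$), every connected component of $?C_1$ either has left endpoint $z_f$ or has left endpoint $>d$, and the latter is forbidden by the two facts just proved. Hence $?C_1$, if nonempty, is the single interval $(z_f,z_g)$ with $z_g:=\sup?C_1<\infty$; in all cases $z_g\in[d,\infty)$, $h=\ubar g$ on $(d,z_g]$ and $h=\bar g$ on $[z_g,\infty)$, as asserted.

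The step I expect to be the main obstacle is this last one — excluding oscillation of $h$ between $?C_1$ and $?C_2$ on $(d,\infty)$, that is, ruling out $?C_1$-components with left endpoint strictly larger than $d$. The mechanism above reads the sign of $h''$ at the crossing points $\{h'=-\beta\}$ directly off the two ODEs and confronts it with the strict monotonicity of $h$; the only points needing care are that $h''$ is genuinely continuous at such points — which holds because they all lie in $(0,d)$ or $(d,\infty)$, away from the single discontinuity of $h''$ at $z=d$ — and that the endpoint conventions at $z_f$, $d$ and $z_g$ match the differentiability demanded by Definition~\ref{def:solution}.
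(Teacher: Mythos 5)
Your proof is correct and essentially reproduces the paper's own argument: the decisive step in both is to evaluate \eqref{HJB} at points where $h'(z)=-\beta$, which yields $\frac{\sigma^2}{2}h''(z)=rh(z)-\mu\beta+\one_{\{z>d\}}$, and to confront the resulting signs of $h''$ at successive crossing points with the strict monotonicity of $h$ from Lemma~\ref{lemma:strictlydecreasing}, while an unbounded stretch of $\{h'<-\beta\}$ is excluded via the explicit form of $\ubar{g}$ and the boundedness of $h$. Your bookkeeping by connected components of $?C_1$, the tail exclusion through $\ubar{g}(z)\to-1/r<(\beta u_0-1)/r$ (instead of the paper's observation that $\ubar{g}'$ cannot stay below $-\beta$), and the sign analysis of $A_2,B_2$ on $[0,d]$ merely spell out what the paper calls the analogous argument, so this is the same approach in slightly different packaging.
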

\begin{proof}
First note that $h$ is strictly decreasing by Lemma \ref{lemma:strictlydecreasing}.
We start by showing that $\ubar{g}$ can not be a solution on $(z_g,\infty)$ for
some $z_g\geq d$. Assume that this holds. This is only the case if
$\ubar{g}'(z)\leq -\beta$ for all
$z>z_g$. Since the value function is bounded and decreasing, we have $a_1=0$
and $b_1>0$. But then we find that
\[
-\beta \geq \ubar{g}'(z) = -\theta_2(0) b_1 \e^{-\theta_2(0)z}
\]
for all $z>z_g$, which is impossible for $b_1<\infty$. Thus, either
it exists $z_g\geq d$, such that $h(z)=\bar{g}(z)$ for all $z\geq z_g$ or the
two solutions $\ubar{g}$ and $\bar{g}$ alternate. Next, we show that only the former
can apply.
At a point, where we glue the two possible solutions together, we must have
$\ubar{g}'(z)=\bar{g}'(z) = -\beta$. At such a point we get 
\begin{equation}
\frac12 \sigma^2 h''(z) = r h(z)- \mu\beta +1\;, \label{eq:1}
\end{equation}
implying that for two such points $z_g < \tilde z_g$ we
have $h''(z_g) > h''(\tilde z_g)$. Since we have to switch between $\{h'(z)<-\beta\}$ and $\{h'(z)\geq -\beta\}$ at $z_g$ and $\tilde{z}_g$, this is only possible if $h''(z_g) \ge 0$ and
$h''(\tilde z_g) \le 0$. In view of \eqref{eq:1}, there cannot exist another
$\hat{z}_g>\tilde{z}_g$ where we switch back to $\{h'(z)\geq -\beta\}$. But,
$\ubar{g}$ cannot be the solution on $(\tilde{z}_g,\infty)$, which proves the
assertion for $z \ge d$.

By the boundary condition $h'(0)=0$ we have $h(z) = \bar{f}(z)$ on some interval
$[0,\epsilon)$. An analogous argument shows then the assertion for $z \in
[0,d]$. 
\end{proof}
By using the boundedness of $h(z)$ and the boundary condition, we can specify $\bar{f}$ and $\bar{g}$ more precisely. Moreover, we can determine $\lim_{z\to\infty} h(z)$ as stated in Lemma \ref{lemma:propertiesofv}:
\begin{lemma}
The solution to \eqref{HJB} is of the form
\begin{equation*}
\bar{f}(z) = \frac{\beta u_0}{r} +
A_2\bigl(\e^{\theta_1(u_0)z}+\frac{\theta_1(u_0)}{\theta_2(u_0)}
\e^{-\theta_2(u_0)z}\bigr) \quad \text{\rm for}\; z\in[0,z_f]
\end{equation*}
and
\begin{equation*}
\bar{g}(z) = \frac{\beta u_0-1}{r} + b_2 \e^{-\theta_2(u_0)z} \quad \text{for}\; z>z_g\;.
\end{equation*}
In particular, $\lim_{z\to\infty} h(z) = (\beta u_0-1)/r$.
\hfill\qed
\end{lemma}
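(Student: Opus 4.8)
The plan is to pin down the two ``outer'' pieces $\bar f$ and $\bar g$ by combining the boundedness of $h$ with the boundary condition $h'(0)=0$, using the structural description from Lemma~\ref{lemma:formofh}. By that lemma, on $[0,z_f\wedge d]$ the solution $h$ equals $\bar f$, a solution of \eqref{sol1b}, namely $h(z)=\beta u_0/r+A_2\e^{\theta_1(u_0)z}+B_2\e^{-\theta_2(u_0)z}$. First I would differentiate, evaluate at $z=0$, and impose $h'(0)=0$: this gives $\theta_1(u_0)A_2-\theta_2(u_0)B_2=0$, hence $B_2=(\theta_1(u_0)/\theta_2(u_0))A_2$. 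Substituting back yields exactly the stated form of $\bar f$ on $[0,z_f]$, with the single remaining free constant $A_2$ (which will later be fixed by the smooth-fit/glueing conditions at $z_f$, $d$, $z_g$).

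Next I would treat $\bar g$ on $(z_g\vee d,\infty)=(z_g,\infty)$, where by Lemma~\ref{lemma:formofh} $h$ equals $\bar g$ of the form \eqref{sol2b}, $h(z)=(\beta u_0-1)/r+a_2\e^{\theta_1(u_0)z}+b_2\e^{-\theta_2(u_0)z}$. Here the argument is the boundedness of $h$ required by Definition~\ref{def:solution} (the codomain is $[(\beta u_0-1)/r,\beta u_0/r]$): since $\theta_1(u_0)>0$, the term $a_2\e^{\theta_1(u_0)z}$ is unbounded as $z\to\infty$ unless $a_2=0$. Indeed, if $a_2>0$ then $h(z)\to+\infty$, and if $a_2<0$ then $h(z)\to-\infty$, both contradicting boundedness. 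Hence $a_2=0$ and $\bar g(z)=(\beta u_0-1)/r+b_2\e^{-\theta_2(u_0)z}$ for $z>z_g$, as claimed.

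Finally, the statement about the limit follows immediately: from the expression for $\bar g$ just derived, $\lim_{z\to\infty}h(z)=\lim_{z\to\infty}\bigl((\beta u_0-1)/r+b_2\e^{-\theta_2(u_0)z}\bigr)=(\beta u_0-1)/r$, since $\theta_2(u_0)>0$. This also completes the deferred verification of the limit claimed in Lemma~\ref{lemma:propertiesofv}, once one knows (from the construction in Theorem~\ref{thm:drawdownsensitive}) that the value function $v$ is such a solution $h$.

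I do not expect any serious obstacle here: the proof is essentially a short computation plus a boundedness observation, and all the structural work has already been done in Lemmas~\ref{lemma:strictlydecreasing}--\ref{lemma:formofh}. The only point requiring a little care is to be sure that Lemma~\ref{lemma:formofh} genuinely licenses us to say that $h$ coincides with a single solution branch of the stated type on the whole of $[0,z_f\wedge d]$ and on the whole of $(z_g\vee d,\infty)$ — i.e.\ that these are the outermost pieces in the glueing — and, relatedly, to handle the degenerate cases $z_f=d$ or $z_g=d$ so that the formulas still make sense; but these are immediate from the form given in Lemma~\ref{lemma:formofh}.

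\begin{proof}
By Lemma~\ref{lemma:formofh}, on the interval $[0,z_f\wedge d]$ the solution $h$ coincides with $\bar f$, which is of the form \eqref{sol1b}:
\[
h(z) = \frac{\beta u_0}{r} + A_2\e^{\theta_1(u_0)z} + B_2\e^{-\theta_2(u_0)z}\;.
\]
Differentiating and using the boundary condition $h'(0)=0$ gives $\theta_1(u_0)A_2-\theta_2(u_0)B_2=0$, hence $B_2 = \frac{\theta_1(u_0)}{\theta_2(u_0)}A_2$. Substituting this back yields the asserted form of $\bar f$ on $[0,z_f]$.

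Again by Lemma~\ref{lemma:formofh}, on $(z_g\vee d,\infty)$ the solution $h$ coincides with $\bar g$, which is of the form \eqref{sol2b}:
\[
h(z) = \frac{\beta u_0-1}{r} + a_2\e^{\theta_1(u_0)z} + b_2\e^{-\theta_2(u_0)z}\;.
\]
Since $\theta_1(u_0)>0$, the term $a_2\e^{\theta_1(u_0)z}$ tends to $+\infty$ (if $a_2>0$) or $-\infty$ (if $a_2<0$) as $z\to\infty$; as $\theta_2(u_0)>0$ the remaining term is bounded, so $h(z)$ would be unbounded unless $a_2=0$. This contradicts $h$ taking values in $[(\beta u_0-1)/r,\beta u_0/r]$. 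Therefore $a_2=0$ and
\[
\bar g(z) = \frac{\beta u_0-1}{r} + b_2\e^{-\theta_2(u_0)z}\quad\text{for }z>z_g\;.
\]
Since $\theta_2(u_0)>0$, letting $z\to\infty$ gives $\lim_{z\to\infty}h(z) = (\beta u_0-1)/r$.
\end{proof}
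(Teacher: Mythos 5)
Your proof is correct and follows exactly the argument the paper intends (the paper omits the proof as immediate): the boundary condition $h'(0)=0$ forces $B_2=\frac{\theta_1(u_0)}{\theta_2(u_0)}A_2$ in the branch \eqref{sol1b}, and boundedness of $h$ forces $a_2=0$ in the branch \eqref{sol2b}, from which the limit $(\beta u_0-1)/r$ is immediate. No gaps.
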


\section{A Direct Approach} \label{section:adirectapproach}

By Lemma \ref{lemma:formofh} we know how a solution to \eqref{HJB} looks
like. In principle, we just have to solve Equation \eqref{HJB} with the
boundary conditions at $0$, $d$ and $\infty$. This works without problems in
the case where dividends are always paid, see
Theorem~\ref{lemma:solution:beta>zeta:1}. If $z_f$, $z_g$ are different from
$d$ a solution is hard. We will show in Section \ref{section:constructionofsolution} that a solution exists
and give a procedure how to determine $v(d)$.

For now, we start with the case $U(t) = u_0$.
\begin{theorem} \label{lemma:solution:beta>zeta:1}
Let
\begin{alpheq}\label{eq:casexi1}
\begin{eqnarray}
\bar{f}(z) &=& \frac{\beta u_0}{r} -
\frac{1}{r}\frac{\theta_2(u_0)\e^{\theta_1(u_0)z}+\theta_1(u_0)\e^{-\theta_2(u_0)z}}{(\theta_1(u_0)+\theta_2(u_0))\e^{\theta_1(u_0)d}}\;,
\quad z\in[0,d]\;, \quad \text{\rm and} \label{eq:casexi1a}\\
\bar{g}(z) &=& \frac{\beta u_0 -1}{r} + \frac{1}{r} \frac{\theta_1(u_0)(\e^{\theta_1(u_0)d}-\e^{-\theta_2(u_0)d})}{(\theta_1(u_0)+\theta_2(u_0))\e^{\theta_1(u_0)d}}\e^{-\theta_2(u_0)(z-d)}\;, \quad z>d\;. \label{eq:casexi1b}
\end{eqnarray}
\end{alpheq}
Then, the value function is given by
\begin{equation*}
v(z) = \bar{f}(z)\one_{[0,d]}(z) + \bar{g}(z)\one_{(d,\infty)}(z)\;,
\end{equation*}
if and only if
\begin{equation}
\beta \ge \zeta := \frac{\theta_1(u_0)\theta_2(u_0)(\e^{\theta_1(u_0)d}-\e^{-\theta_2(u_0)d})}{r(\theta_1(u_0)+\theta_2(u_0))\e^{\theta_1(u_0)d}}>0\;. \label{zeta}
\end{equation}
In this case, an optimal strategy is $U(t) = u_0$ for all $t\geq 0$.
\end{theorem}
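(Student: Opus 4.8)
The plan is to derive, for the explicit candidate $h := \bar f\,\one_{[0,d]} + \bar g\,\one_{(d,\infty)}$ built from \eqref{eq:casexi1}, one \emph{exact} identity for $v^{U}(z)$ valid for every $U\in?U$, and then to read off both directions of the equivalence from the sign of $\beta+h'$ along the paths of the drawdown. First I would record the properties of $h$ by differentiating \eqref{eq:casexi1a}--\eqref{eq:casexi1b}: $h$ is continuously differentiable on $[0,\infty)$ (in particular $\bar f(d)=\bar g(d)$ and $\bar f'(d)=\bar g'(d)$), twice continuously differentiable on $[0,d]$ and on $(d,\infty)$, decreasing, with values in $[(\beta u_0-1)/r,\beta u_0/r]$, satisfies $h'(0)=0$, and solves $?A^{u_0}(h)(z)=\one_{\{z>d\}}$ on $[0,d]$ and on $(d,\infty)$ — all immediate from \eqref{sol1b} and \eqref{sol2b}, since $\theta_1(u_0)$ and $-\theta_2(u_0)$ are the characteristic roots and the $C^1$-matching at $d$ together with the Neumann condition at $0$ pin down the constants. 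The one fact to extract with care is that $h'$ is bounded with $\bar f'$ strictly decreasing from $0$ to $\bar f'(d)=-\zeta$ on $[0,d]$ and $\bar g'(z)=-\zeta\,\e^{-\theta_2(u_0)(z-d)}$ strictly increasing from $-\zeta$ back to $0$ on $(d,\infty)$; hence $h'(z)\in[-\zeta,0]$ for all $z\ge 0$ and $\min_{z\ge 0}h'(z)=h'(d)=-\zeta$. This is precisely why $\zeta$ is the threshold.

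Next I would rerun the generalised It\^o computation from the proof of Theorem~\ref{verification} for this $h$ and an arbitrary $U\in?U$, but \emph{without} estimating the drift term. Since $?A^{u}(h)$ is affine in $u$ and $?A^{u_0}(h)(z)=\one_{\{z>d\}}$, we have $?A^{U(s)}(h)(z)=\one_{\{z>d\}}+(\beta+h'(z))(U(s)-u_0)$, and using $h'(0)=0$, the boundedness of $h'$ (so the stochastic integral is a true martingale) and bounded convergence exactly as in that proof yields
\[
v^{U}(z) = h(z) + \E\Bigl[\int_0^\infty \e^{-rt}\bigl(\beta+h'(\Delta_z^{U}(t))\bigr)\bigl(U(t)-u_0\bigr)\id t\Bigr], \qquad z\ge 0,\ U\in?U .
\]
Taking $U\equiv u_0$ here shows in passing that $h$ is nothing but the return $v^{u_0}$ of the constant maximal strategy, so the theorem asserts that paying maximally everywhere is optimal exactly when $\beta\ge\zeta$.

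The two implications now follow from the sign of $\beta+h'$. If $\beta\ge\zeta$, then $\beta+h'(z)\ge\beta-\zeta\ge 0$ for all $z$ by Step~1, while $U(t)-u_0\le 0$; the integrand is therefore $\le 0$, so $v^{U}(z)\le h(z)$ for every $U$, with equality for $U\equiv u_0$. Hence $v=h$ and $U\equiv u_0$ is optimal. Conversely, if $\beta<\zeta$ then $I:=\{z\ge 0:\beta+h'(z)<0\}$ is a nonempty open interval containing $d$, and I would take the feedback strategy $\tilde U(t)=u_0\,\one_{\{\beta+h'(\Delta_z^{\tilde U}(t))\ge 0\}}$; this is admissible because under it the drawdown solves a reflected SDE with bounded, piecewise-constant drift and non-degenerate diffusion, which is well-posed. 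For this $\tilde U$ the integrand above is nonnegative everywhere and strictly positive whenever $\Delta_z^{\tilde U}(t)\in I$; choosing a starting point $z\in I$, the drawdown remains inside $I$ — in fact inside a compact subinterval on which $\beta+h'$ is bounded away from $0$ — for a positive random time, so the expectation in the identity is strictly positive and $v(z)\ge v^{\tilde U}(z)>h(z)$. Thus the displayed representation of $v$ forces $\beta\ge\zeta$.

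Step~1 is mechanical but must be done attentively, the crux being the identity $\min_{z\ge 0}h'(z)=h'(d)=-\zeta$; once the exact identity of Step~2 is in hand, the equivalence drops out from the sign of $\beta+h'$. I expect the only genuine obstacle to be the converse: producing an \emph{admissible} improving strategy and showing \emph{quantitatively} that it beats $U\equiv u_0$. That needs admissibility of the bang-bang feedback strategy (well-posedness of the reflected SDE with bounded, finite-variation drift) together with a lower bound on the time the controlled drawdown spends in $I$ — both standard consequences of the non-degeneracy of the Brownian driver, but each requiring a short argument.
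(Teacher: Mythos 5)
Your argument is correct, and while its sufficiency half coincides with the paper's (compute $\bar f',\bar g'$, observe $\min_{z\ge0}h'(z)=h'(d)=-\zeta$, and run the generalised It\^o/martingale computation with $h'(0)=0$), you organise it differently in a way that pays off for the converse. The paper simply identifies $h$ as the return of $U\equiv u_0$ and appeals to the Verification Theorem, checking that $h$ solves \eqref{HJB} exactly when $\beta\ge\zeta$; the ``only if'' direction is left terse there and is in effect completed only by the later construction for $\beta<\zeta$ (Theorem \ref{thm:drawdownsensitive} and the remark following it). You instead keep the It\^o computation as an exact identity, $v^{U}(z)=h(z)+\E\bigl[\int_0^\infty \e^{-rt}(\beta+h'(\Delta_z^{U}(t)))(U(t)-u_0)\,\id t\bigr]$, from which sufficiency is the sign observation $\beta+h'\ge\beta-\zeta\ge0$, and necessity follows self-containedly by exhibiting an improving strategy on the set $I=\{\beta+h'<0\}$ (a neighbourhood of $d$, since $h'$ decreases to $-\zeta$ on $[0,d]$ and increases back to $0$ on $(d,\infty)$). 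This buys a transparent interpretation of $\zeta=-\min h'$ as the exact threshold and makes the ``only if'' independent of Section \ref{section:constructionofsolution}, at the modest cost of the admissibility question for your bang-bang feedback strategy; that is settled by the same reflected-SDE references the paper invokes before Theorem \ref{thm:drawdownsensitive} (your $u_0\one_{[0,z_1]\cup[z_2,\infty)}$ has exactly that form), or can be avoided altogether by the simpler admissible strategy ``pay nothing until the drawdown first leaves a compact subinterval of $I$ containing the starting point, then pay $u_0$ forever'', for which your identity still gives a strictly positive correction term. With either fix, the proof is complete.
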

\begin{proof}
We can calculate the value of the strategy $U(t) = u_0$. By the verification
theorem it then suffices to show that the corresponding value function solves
\eqref{HJB} if and only if the condition is fulfilled. The solutions
\eqref{sol1b} and \eqref{sol2b} with the boundary condition $\bar{f}'(0) = 0$,
$\bar{g}(\infty) < \infty$, $\bar{f}(d) = \bar{g}(d)$ and $\bar{f}'(d) =
\bar{g}'(d)$ yield the solutions \eqref{eq:casexi1}.
In order that $h(z) = \bar{f}(z)\one_{[0,d]}(z) +
\bar{g}(z)\one_{(d,\infty)}(z)$ is a solution to the HJB equation \eqref{HJB},
we need $h'(z) \geq -\beta$ for all $z\geq 0$. Note that $\bar{f}(z)$ is
concave and $\bar g(z)$ is convex.
We consider $\bar{f}$ and $\bar{g}$ separately. For $z\in[0,d]$ we find
\begin{eqnarray*}
\bar{f}'(z) &=& -\frac{1}{r}\frac{\theta_1(u_0)\theta_2(u_0)(\e^{\theta_1(u_0)z}-\e^{-\theta_2(u_0)z})}{(\theta_1(u_0)+\theta_2(u_0))\e^{\theta_1(u_0)d}} \\
&\geq& -\frac{1}{r}\frac{\theta_1(u_0)\theta_2(u_0)(\e^{\theta_1(u_0)d}-\e^{-\theta_2(u_0)d})}{(\theta_1(u_0)+\theta_2(u_0))\e^{\theta_1(u_0)d}} = -\zeta \geq -\beta \;.
\end{eqnarray*}
For $z>d$ we find
\begin{eqnarray*}
\bar{g}'(z) &=& -\frac{1}{r} \frac{\theta_1(u_0)\theta_2(u_0)(\e^{\theta_1(u_0)d}-\e^{-\theta_2(u_0)d})}{(\theta_1(u_0)+\theta_2(u_0))\e^{\theta_1(u_0)d}} \e^{-\theta_2(u_0)(z-d)} \\
&\geq& -\frac{1}{r} \frac{\theta_1(u_0)\theta_2(u_0)(\e^{\theta_1(u_0)d}-\e^{-\theta_2(u_0)d})}{(\theta_1(u_0)+\theta_2(u_0))\e^{\theta_1(u_0)d}} = -\zeta \geq -\beta \;.
\end{eqnarray*}
This shows that $h(z)$ solves the HJB equation \eqref{HJB} with the optimiser
$u^{*}(z) = u_0$ for all $z\geq 0$.
\end{proof}
\begin{rem} \label{rem:zeta}
Note that the function $u_0\mapsto\zeta(u_0)$ attains its maximum at $u_0=\mu$
and that $\zeta(\mu-u_0)=\zeta(\mu+u_0)$ for all $u_0\geq 0$. On the other
hand, the function $\sigma^2\mapsto\zeta(\sigma^2)$ is decreasing. We will
give an interpretation of $\zeta$ and its behaviour in Section
\ref{section:numerical}. Obviously, if $u_0$ is small, the loss of drift
is small and therefore one can risk paying dividends for a smaller $\beta$. If
$u_0$ is large, the profit of the dividend payments is larger than the loss by
the drawdown already for a small $\beta$.
\begin{figure} \label{fig:zeta}
\centering
\includegraphics[width=0.5\textwidth]{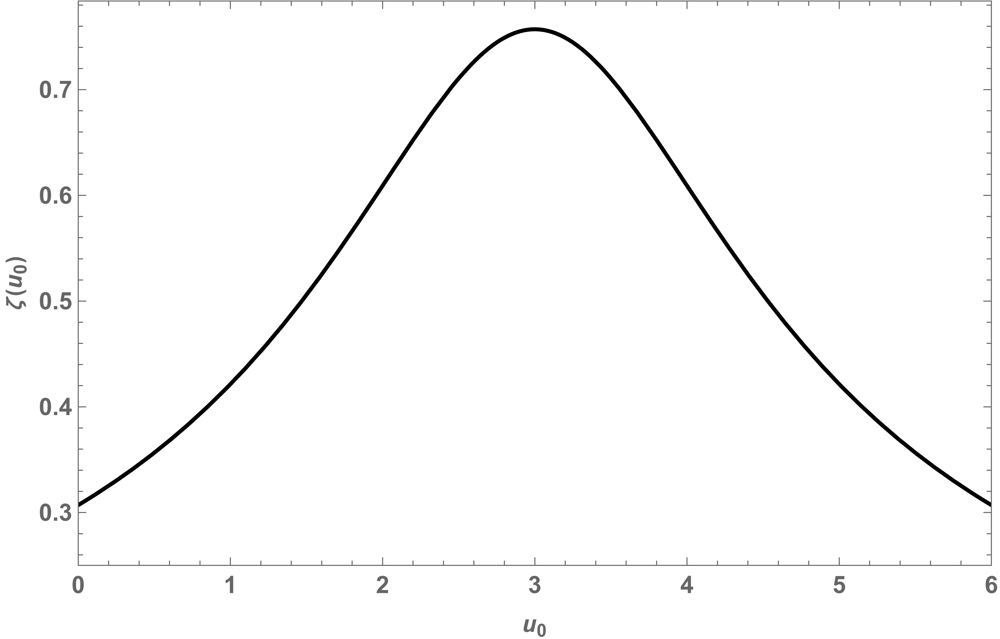}
\caption{Plot of $u_0\mapsto\zeta(u_0)$ for parameters $\mu=3$, $\sigma=2$, $d=5$ and $r=0.2$.}
\end{figure}
\hfill$\blacksquare$
\end{rem}
One could try the same direct approach to construct a solution of the form
\begin{equation}
        h(z) = \bar{f}(z)\one_{[0,z_f]}(z)+\ubar{f}(z)\one_{(z_f,d]}(z)+\ubar{g}(z)\one_{(d,z_g]}(z)+\bar{g}(z)\one_{(z_g,\infty)}(z)\;, \label{eq:soltype2}
\end{equation}
where $z_f\in(0,d)$ and $z_g\in[d,\infty)$ or $z_f\in(0,d]$ and
$z_g\in(d,\infty)$. It then turns out that it is hard to give an explicit
expression for $z_f$ and $z_g$. Because we did not yet show that a solution to
\eqref{HJB} exists, we take a different approach, which is presented in the next section.
\begin{figure} \label{fig:divdom}
\centering
\includegraphics[width=0.7\textwidth]{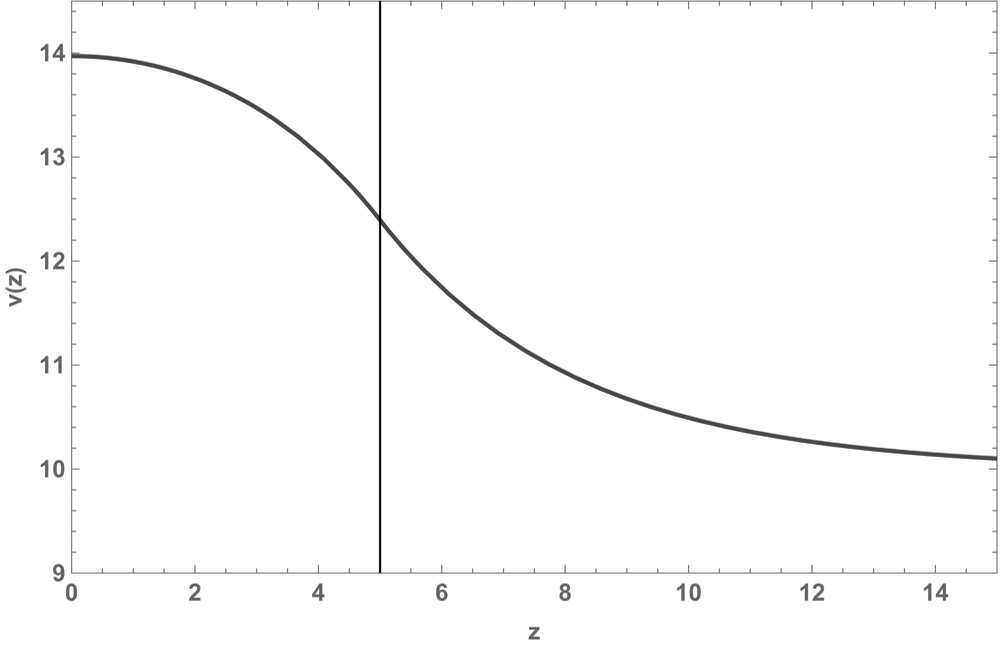}
\caption{Plot of $v(z)$ for parameters $\mu=3$, $\sigma=2$, $u_0=3$, $d=5$, $r=0.2$ and $\beta=1>\zeta=0.757105$.}
\end{figure}

\section{Construction of a Solution} \label{section:constructionofsolution}
\subsection{Breakdown into two Sub-Problems}
Let us now consider a slightly different problem: For $C\in((\beta u_0-1)/r,\beta u_0/r)$ we want to find bounded and twice continuously differentiable functions $f_C$ on $[0,d]$ and $g_C$ on $[d,\infty)$, such that
 \begin{eqnarray}
 -rf_C(z)-\mu f_C'(z) + \frac{\sigma^2}{2} f_C''(z) + \sup_{u\in[0,u_0]}\{(\beta+f_C'(z))u\} = 0 \label{problem1}
 \end{eqnarray}
with the boundary conditions $f_C'(0)=0$ and $f_C(d)=C$ as well as
 \begin{eqnarray}
 -rg_C(z)-\mu g_C'(z) + \frac{\sigma^2}{2} g_C''(z) + \sup_{u\in[0,u_0]}\{(\beta+g_C'(z))u\} = 1 \label{problem2}
\end{eqnarray}
with the boundary condition $g_C(d)=C$. The possible solutions are again of the
types \eqref{sol}. We now want to show that each of the two subproblems has a
solution. In order to find a solution to our original problem on
$[0,\infty)$, we will show in the end that it is possible to choose $C$ in
such a way that $f_C'(d)=g_C'(d)$. But for the moment, we will leave this condition aside. The following two constants will be important:
\begin{eqnarray}
\xi_1(\beta) &:=& \frac{\beta u_0}{r} - \beta \frac{\e^{\theta_1(u_0)d}+\frac{\theta_1(u_0)}{\theta_2(u_0)}\e^{-\theta_2(u_0)d}}{\theta_1(u_0)(\e^{\theta_1(u_0)d}-\e^{-\theta_2(u_0)d})} < \frac{\beta u_0}{r} \\
 \xi_2(\beta) &:=& \frac{\beta u_0 - 1}{r} + \frac{\beta}{\theta_2(u_0)} > \frac{\beta u_0 - 1}{r}\;.
\end{eqnarray}
Direct calculation yields the following
\begin{lemma} \label{lemma:connectionzetaxi}
We have 
\[\xi_1(\beta) \lesseqqgtr \xi_2(\beta) \quad \Leftrightarrow \quad \zeta
\lesseqqgtr \beta\;, \]
where $\zeta$ is defined in \eqref{zeta}.\hfill\qed
\end{lemma}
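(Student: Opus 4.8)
The plan is to verify the claimed equivalence by a direct computation, treating the three cases $\xi_1(\beta)<\xi_2(\beta)$, $\xi_1(\beta)=\xi_2(\beta)$, $\xi_1(\beta)>\xi_2(\beta)$ simultaneously. First I would form the difference
\[
\xi_1(\beta)-\xi_2(\beta) = \frac{\beta u_0}{r} - \beta\,\frac{\e^{\theta_1(u_0)d}+\frac{\theta_1(u_0)}{\theta_2(u_0)}\e^{-\theta_2(u_0)d}}{\theta_1(u_0)(\e^{\theta_1(u_0)d}-\e^{-\theta_2(u_0)d})} - \frac{\beta u_0 - 1}{r} - \frac{\beta}{\theta_2(u_0)}\,,
\]
and observe that the two $\beta u_0/r$ contributions cancel, leaving
\[
\xi_1(\beta)-\xi_2(\beta) = \frac1r - \beta\Bigl[\frac{\e^{\theta_1(u_0)d}+\frac{\theta_1(u_0)}{\theta_2(u_0)}\e^{-\theta_2(u_0)d}}{\theta_1(u_0)(\e^{\theta_1(u_0)d}-\e^{-\theta_2(u_0)d})} + \frac{1}{\theta_2(u_0)}\Bigr]\,.
\]
Since the bracketed quantity is a strictly positive constant (both $\theta_i(u_0)>0$ and $\e^{\theta_1(u_0)d}>\e^{-\theta_2(u_0)d}$), dividing through shows that $\xi_1(\beta)\lesseqqgtr\xi_2(\beta)$ is equivalent to comparing $\beta$ with the reciprocal $1/\bigl(r\cdot[\text{bracket}]\bigr)$, with the inequality direction preserved because the bracket is positive.

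The remaining task is purely algebraic: show that
\[
\frac1{\,r\Bigl[\frac{\e^{\theta_1(u_0)d}+\frac{\theta_1(u_0)}{\theta_2(u_0)}\e^{-\theta_2(u_0)d}}{\theta_1(u_0)(\e^{\theta_1(u_0)d}-\e^{-\theta_2(u_0)d})} + \frac{1}{\theta_2(u_0)}\Bigr]\,} = \zeta\,,
\]
with $\zeta$ as in \eqref{zeta}. I would bring the bracket over the common denominator $\theta_1(u_0)\theta_2(u_0)(\e^{\theta_1(u_0)d}-\e^{-\theta_2(u_0)d})$; its numerator becomes
\[
\theta_2(u_0)\e^{\theta_1(u_0)d}+\theta_1(u_0)\e^{-\theta_2(u_0)d} + \theta_1(u_0)\bigl(\e^{\theta_1(u_0)d}-\e^{-\theta_2(u_0)d}\bigr) = (\theta_1(u_0)+\theta_2(u_0))\e^{\theta_1(u_0)d}\,,
\]
so the bracket equals $\dfrac{(\theta_1(u_0)+\theta_2(u_0))\e^{\theta_1(u_0)d}}{\theta_1(u_0)\theta_2(u_0)(\e^{\theta_1(u_0)d}-\e^{-\theta_2(u_0)d})}$, and its reciprocal divided by $r$ is exactly $\zeta$. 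Positivity of $\zeta$ is immediate from $\theta_i(u_0)>0$ and $\e^{\theta_1(u_0)d}>\e^{-\theta_2(u_0)d}$, which also confirms the chain of equivalences is sign-consistent.

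There is no real obstacle here — the lemma is flagged as "direct calculation" and the only thing to watch is bookkeeping: the cancellation of the $\beta u_0/r$ terms and the telescoping in the numerator that collapses $\theta_2 \e^{\theta_1 d} + \theta_1 \e^{-\theta_2 d} + \theta_1(\e^{\theta_1 d} - \e^{-\theta_2 d})$ to $(\theta_1+\theta_2)\e^{\theta_1 d}$. I would present the argument in exactly that order: cancel, factor out $\beta$, identify the positive constant, invert, and recognize $\zeta$. The only point deserving an explicit sentence is that because the multiplicative constant relating $\beta$ to the $\xi_i$-difference is \emph{positive}, the three-way comparison $\lesseqqgtr$ transfers without flipping.
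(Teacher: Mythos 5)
Your computation is correct and is exactly the ``direct calculation'' the paper invokes (the paper gives no written proof): the difference $\xi_1(\beta)-\xi_2(\beta)$ collapses to $\tfrac1r-\beta\cdot\frac{(\theta_1(u_0)+\theta_2(u_0))\e^{\theta_1(u_0)d}}{\theta_1(u_0)\theta_2(u_0)(\e^{\theta_1(u_0)d}-\e^{-\theta_2(u_0)d})}=\frac{1}{r\zeta}(\zeta-\beta)$, with a positive prefactor, which gives the three-way equivalence. No gaps; nothing further needed.
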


\subsection{The Dividend-Dominated Case}

In the following, we will reproduce the result of Theorem
\ref{lemma:solution:beta>zeta:1} using the new approach. That is, $U(t) =
u_0$ is optimal if $\beta\geq\zeta$ and $C \in [\xi_1(\beta),\xi_2(\beta)]$.
\begin{prop} \label{prop:Cbiggerxi}
If $\xi_1(\beta) \le C< \beta u_0/r$, then the function
\begin{alpheq}
\begin{equation}
f_C(z) = \frac{\beta u_0}{r} - \frac{\frac{\beta u_0}{r}-C}{\e^{\theta_1(u_0)d}+\frac{\theta_1(u_0)}{\theta_2(u_0)}\e^{-\theta_2(u_0)d}}\bigl( \e^{\theta_1(u_0)z}+\frac{\theta_1(u_0)}{\theta_2(u_0)}\e^{-\theta_2(u_0)z}\bigr)\;, \qquad z\in[0,d] \label{eq:f:beta>zeta}
\end{equation}
solves \eqref{problem1} with $f_C'(0)=0$ and $f_C(d)=C$. 
If $(\beta u_0 - 1)/r < C\leq\xi_2(\beta)$, then the function
\begin{equation}
g_C(z) = \frac{\beta u_0 - 1}{r}+\bigl(C - \frac{\beta u_0 -1}{r} \bigr)\e^{-\theta_2(u_0)(z-d)}\;, \qquad z\in[d,\infty) \label{eq:g:beta>zeta}
\end{equation}
\end{alpheq}
solves \eqref{problem2} with $g_C(d)=C$ and $g_C(\infty)<\infty$.
 \end{prop}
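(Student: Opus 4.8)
The plan is to verify directly that the proposed functions $f_C$ and $g_C$ satisfy their respective differential equations, boundary conditions, and — crucially — fall in the regime where the supremum in the HJB-type equation is attained at $u=u_0$, so that the simplified linear ODE applies.

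First I would treat $f_C$. Since $f_C$ is built from $\e^{\theta_1(u_0)z}$ and $\e^{-\theta_2(u_0)z}$, which are exactly the solutions of $-r\phi-(\mu-u_0)\phi'+\tfrac{\sigma^2}{2}\phi''=0$, the constant shift by $\beta u_0/r$ makes $f_C$ solve $-rf_C-(\mu-u_0)f_C'+\tfrac{\sigma^2}{2}f_C''=-\beta u_0$, which is precisely \eqref{problem1} \emph{provided} $\beta+f_C'(z)\geq 0$ on $[0,d]$, i.e. the supremum is realised at $u=u_0$. The boundary conditions are a short computation: $f_C'(0)=0$ forces the particular combination $\e^{\theta_1(u_0)z}+\frac{\theta_1(u_0)}{\theta_2(u_0)}\e^{-\theta_2(u_0)z}$ appearing in the formula (differentiate and evaluate at $0$), and the normalising constant in front is then fixed by $f_C(d)=C$. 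So the only real content is the sign condition $f_C'(z)\geq-\beta$ for $z\in[0,d]$. Here I compute $f_C'$ explicitly; it is a negative multiple of $\theta_1(u_0)\e^{\theta_1(u_0)z}-\theta_2(u_0)\frac{\theta_1(u_0)}{\theta_2(u_0)}\e^{-\theta_2(u_0)z}=\theta_1(u_0)(\e^{\theta_1(u_0)z}-\e^{-\theta_2(u_0)z})$, which is nonnegative and increasing on $[0,d]$, so $f_C'$ is decreasing and its minimum over $[0,d]$ is at $z=d$. Plugging in $z=d$ and using $C\geq\xi_1(\beta)$ — which is exactly the inequality $\tfrac{\beta u_0}{r}-C\leq\beta\frac{\e^{\theta_1(u_0)d}+\frac{\theta_1(u_0)}{\theta_2(u_0)}\e^{-\theta_2(u_0)d}}{\theta_1(u_0)(\e^{\theta_1(u_0)d}-\e^{-\theta_2(u_0)d})}$ rearranged — yields $f_C'(d)\geq-\beta$, hence $f_C'\geq-\beta$ throughout. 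This is the step I expect to be the main (though still mild) obstacle: matching the somewhat opaque definition of $\xi_1(\beta)$ to the pointwise derivative bound.

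For $g_C$, the argument is parallel but simpler. Boundedness on $[d,\infty)$ forces the coefficient of the growing exponential $\e^{\theta_1(u_0)z}$ to vanish, so only the $\e^{-\theta_2(u_0)(z-d)}$ term survives; the constant shift $\tfrac{\beta u_0-1}{r}$ makes $g_C$ solve $-rg_C-(\mu-u_0)g_C'+\tfrac{\sigma^2}{2}g_C''=1-\beta u_0$, i.e. \eqref{problem2} on the region where $u=u_0$ is optimal. The boundary condition $g_C(d)=C$ fixes the remaining coefficient as $C-\tfrac{\beta u_0-1}{r}$, which is positive by the hypothesis $C>(\beta u_0-1)/r$, and $g_C(\infty)=(\beta u_0-1)/r<\infty$ is immediate. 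Finally $g_C'(z)=-\theta_2(u_0)\big(C-\tfrac{\beta u_0-1}{r}\big)\e^{-\theta_2(u_0)(z-d)}$, which is negative and increasing in $z$, so its infimum over $[d,\infty)$ is at $z=d$, equal to $-\theta_2(u_0)\big(C-\tfrac{\beta u_0-1}{r}\big)$; the hypothesis $C\leq\xi_2(\beta)=\tfrac{\beta u_0-1}{r}+\tfrac{\beta}{\theta_2(u_0)}$ is exactly what gives $g_C'(d)\geq-\beta$, hence $\beta+g_C'(z)\geq0$ everywhere and the supremum in \eqref{problem2} is genuinely attained at $u_0$. Assembling these observations completes the proof; no appeal to Lemma~\ref{lemma:connectionzetaxi} is needed here, though it explains why the interval $[\xi_1(\beta),\xi_2(\beta)]$ is nonempty precisely when $\beta\geq\zeta$.
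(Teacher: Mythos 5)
Your proposal is correct and follows essentially the same route as the paper: verify that the given functions are of the $u=u_0$ solution type with the stated boundary conditions, and reduce the only substantive point to the derivative bounds $f_C'(z)\geq-\beta$ on $[0,d]$ and $g_C'(z)\geq-\beta$ on $[d,\infty)$, which follow from monotonicity of the derivatives and the hypotheses $C\geq\xi_1(\beta)$ and $C\leq\xi_2(\beta)$. No gaps.
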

 \begin{proof}
It is easy to see that $f_C(d)=g_C(d)=C$, $f_C'(0)=0$, $g_C(\infty)<\infty$, $f_C$ is
concave, and $g_C$ is convex. $f_C$ is of type \eqref{sol1b} and $g_C$ is of type \eqref{sol2b}. Therefore, it remains to show that $f_C'(z)\geq - \beta$ for all $z\in[0,d]$ and $g_C'(z)\geq - \beta$ for all $z\in[d,\infty)$. Note that for $z\in[0,d]$
\begin{eqnarray*}
f_C'(z) &=& \frac{C-\frac{\beta u_0}{r}}{\e^{\theta_1(u_0)d}+\frac{\theta_1(u_0)}{\theta_2(u_0)}\e^{-\theta_2(u_0)d}}\theta_1(u_0)(\e^{\theta_1(u_0)z}-\e^{-\theta_2(u_0)z}) \\
&\geq&  \frac{C-\frac{\beta u_0}{r}}{\e^{\theta_1(u_0)d}+\frac{\theta_1(u_0)}{\theta_2(u_0)}\e^{-\theta_2(u_0)d}}\theta_1(u_0)(\e^{\theta_1(u_0)d}-\e^{-\theta_2(u_0)d}) \\
&\geq&  \frac{\xi_1(\beta)-\frac{\beta u_0}{r}}{\e^{\theta_1(u_0)d}+\frac{\theta_1(u_0)}{\theta_2(u_0)}\e^{-\theta_2(u_0)d}}\theta_1(u_0)(\e^{\theta_1(u_0)d}-\e^{-\theta_2(u_0)d}) = -\beta
\end{eqnarray*}
and for $z\in[d,\infty)$
\begin{eqnarray*}
g_C'(z) &=& -\theta_2(u_0)\bigl(C-\frac{\beta u_0 -1}{r} \bigr)\e^{-\theta_2(u_0)(z-d)} \geq -\theta_2(u_0)\bigl(C-\frac{\beta u_0 -1}{r} \bigr) \\
&\geq& -\theta_2(u_0)\bigl(\xi_2(\beta)-\frac{\beta u_0 -1}{r} \bigr) = -\beta
\end{eqnarray*}
and the assertion follows.
\end{proof}
\begin{rem}
Note that $C = v(d)$ in Theorem~\ref{lemma:solution:beta>zeta:1} fulfils $C
\in [\xi_1(\beta),\xi_2(\beta)]$ in the case $\beta \ge \zeta$.
\hfill$\blacksquare$
\end{rem}
We can interpret our result in the following way: If the weight of the
dividends $\beta$ is relatively large we do not care of the
drawdown size anymore and pay dividends at the maximal rate. We will see in
the next section that $\zeta$ is a 
critical value in the sense that the optimal strategy is different for
$\beta<\zeta$. We will show that in this case, the solution is of the form
\eqref{eq:soltype2} with $z_f\in(0,d)$ and $z_g\in(d,\infty)$.
\begin{rem} \label{rem:1}
We will see below that $z_f$ and $z_g$ converge to $d$ as $\beta\uparrow
\zeta$. We therefore have a continuous transition from $\beta < \zeta$ to
$\beta > \zeta$ also for the optimal strategies.
\hfill $\blacksquare$
\end{rem}

\subsection{The Drawdown-Sensitive Case}

The following two propositions complement Proposition \ref{prop:Cbiggerxi}. Note that Lemma \ref{lemma:formofh} still holds in the setting of this section, since the proof did not require the condition $\ubar{f}'(d)=\ubar{g}'(d)$.
 \begin{prop} \label{prop:zf}
If $C<\xi_1(\beta)$, then there exists a unique $z_f^C\in(0,d)$ such that the function $f_C(z) = \bar{f}_C(z)\one_{[0,z_f^C]}(z)+\ubar{f}_C\one_{(z_f^C,d]}(z)$ solves \eqref{problem1} with $f_C'(0)=0$ and $f_C(d)=C$, where
\begin{alpheq}
\begin{eqnarray}
\bar{f}_C(z) &=& \frac{\beta u_0}{r} - \beta
\frac{\e^{\theta_1(u_0)z}+\frac{\theta_1(u_0)}{\theta_2(u_0)}\e^{-\theta_2(u_0)z}}{\theta_1(u_0)(\e^{\theta_1(u_0)z_f^C}-\e^{-\theta_2(u_0)z_f^C})}\qquad
\label{flower} \\
\noalign{\text{\rm and}\nonumber}\\
\ubar{f}_C(z) &=& \frac{C\theta_2(0)\e^{-\theta_2(0)(z_f^C-d)}-\beta}{\theta_1(0)\e^{\theta_1(0)z_f^C}+\theta_2(0)\e^{(\theta_1(0)+\theta_2(0))d}\e^{-\theta_2(0)z_f^C}}(\e^{\theta_1(0)z}-\e^{(\theta_1(0)+\theta_2(0))d}\e^{-\theta_2(0)z}) \nonumber\\
&&\hskip1cm +\; C\e^{-\theta_2(0)(z-d)}\;. \label{fupper}
\end{eqnarray}
\end{alpheq}
Moreover, the function $C\mapsto z_f^C$ is increasing and $z_f^{\xi_1(\beta)}=d$. \end{prop}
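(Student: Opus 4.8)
The plan is to treat $z_f^C$ as a free boundary fixed by one matching equation, to obtain existence by an intermediate‑value argument, uniqueness by an energy identity for the boundary value problem \eqref{problem1}, and monotonicity in $C$ by comparing the matching functions. For a candidate breakpoint $y\in(0,d)$ let $\bar{f}_{C,y}$ and $\ubar{f}_{C,y}$ be the right‑hand sides of \eqref{flower} and \eqref{fupper} with $z_f^C$ replaced by $y$. A direct computation shows that these formulas are built so that, for \emph{every} $y$: $\bar{f}_{C,y}$ is of type \eqref{sol1b} with $\bar{f}_{C,y}'(0)=0$; $\ubar{f}_{C,y}$ is of type \eqref{sol1a} with $\ubar{f}_{C,y}(d)=C$; and $\bar{f}_{C,y}'(y)=\ubar{f}_{C,y}'(y)=-\beta$. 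Hence, setting $f_C:=\bar{f}_{C,y}$ on $[0,y]$ and $f_C:=\ubar{f}_{C,y}$ on $(y,d]$, the function $f_C$ solves \eqref{problem1} with $f_C'(0)=0$, $f_C(d)=C$ if and only if (i) $\bar{f}_{C,y}(y)=\ubar{f}_{C,y}(y)$, (ii) $\bar{f}_{C,y}'\ge -\beta$ on $[0,y]$, and (iii) $\ubar{f}_{C,y}'\le -\beta$ on $[y,d]$: (ii)--(iii) force the supremum in \eqref{problem1} to be attained at $u_0$ on $[0,y]$ and at $0$ on $[y,d]$, so that $\bar{f}_{C,y}$, $\ubar{f}_{C,y}$ satisfy exactly \eqref{problem1} on these pieces; and since each of them has $h'(y)=-\beta$ there, its linear equation gives $\frac{\sigma^2}{2}h''(y)=rh(y)-\mu\beta$, so under (i) we get $\bar{f}_{C,y}''(y)=\ubar{f}_{C,y}''(y)$ and $f_C\in C^2([0,d])$ as required.

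\textbf{Existence.} Condition (ii) holds for every $y$, since $\bar{f}_{C,y}'(z)=-\beta(\e^{\theta_1(u_0)z}-\e^{-\theta_2(u_0)z})/(\e^{\theta_1(u_0)y}-\e^{-\theta_2(u_0)y})\in[-\beta,0]$ on $[0,y]$. For (i) set $\Phi_C(y):=\bar{f}_{C,y}(y)-\ubar{f}_{C,y}(y)$ on $(0,d)$; as $y\downarrow0$ the denominator $\theta_1(u_0)(\e^{\theta_1(u_0)y}-\e^{-\theta_2(u_0)y})$ of $\bar{f}_{C,y}$ tends to $0$ while its numerator stays bounded away from $0$, so $\bar{f}_{C,y}(y)\to-\infty$ whereas $\ubar{f}_{C,y}(y)$ stays bounded; thus $\Phi_C(y)\to-\infty$. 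Since $\ubar{f}_{C,y}(d)=C$ for all $y$ and $\bar{f}_{C,d}(d)=\xi_1(\beta)$, we have $\Phi_C(d)=\xi_1(\beta)-C>0$. Let $z_f^C$ be the largest zero of $\Phi_C$, so $\Phi_C>0$ on $(z_f^C,d)$ and hence $\Phi_C'(z_f^C)\ge0$. Differentiating $\Phi_C$, cancelling the interior $z$‑derivatives via $\bar{f}_{C,y}'(y)=\ubar{f}_{C,y}'(y)=-\beta$ and using $\bar{f}_{C,y}''(y)=\ubar{f}_{C,y}''(y)$ at the zero, a short computation (obtained by differentiating the two linear equations in the parameter $y$) gives $\Phi_C'(z_f^C)=-(c_1+c_2)\ubar{f}_{C,z_f^C}''(z_f^C)$ with $c_1,c_2>0$, so $\ubar{f}_{C,z_f^C}''(z_f^C)\le0$, equivalently $\ubar{f}_{C,z_f^C}(z_f^C)\le\mu\beta/r$. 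Finally (iii) follows by a first‑return argument: were $\ubar{f}_{C,z_f^C}'$ to rise back to $-\beta$ at a least $\tilde z\in(z_f^C,d]$, then $\ubar{f}_{C,z_f^C}$ would be strictly decreasing on $[z_f^C,\tilde z]$, giving $\ubar{f}_{C,z_f^C}(\tilde z)<\ubar{f}_{C,z_f^C}(z_f^C)\le\mu\beta/r$, whereas $\ubar{f}_{C,z_f^C}'(\tilde z)=-\beta$, $\ubar{f}_{C,z_f^C}''(\tilde z)\ge0$ and the equation force $\ubar{f}_{C,z_f^C}(\tilde z)\ge\mu\beta/r$ — a contradiction. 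Hence $f_C$ solves \eqref{problem1} with this $z_f^C\in(0,d)$. This verification of (iii) — deciding which zero of $\Phi_C$ is admissible — is the most delicate point of the proof.

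\textbf{Uniqueness.} The supremum in \eqref{problem1} equals $u_0(\beta+f')^+$, a Lipschitz function of $f'$; so if $f^{(1)},f^{(2)}$ both solve \eqref{problem1} with $f'(0)=0$, $f(d)=C$ (each $C^2$ by the above), then $w:=f^{(1)}-f^{(2)}\in C^2([0,d])$ satisfies $\frac{\sigma^2}{2}w''-a(z)w'-rw=0$ with $w'(0)=0$, $w(d)=0$, where $a(z)\in[\mu-u_0,\mu]$ is bounded and measurable. Writing $\rho(z):=\exp\!\big(-\frac{2}{\sigma^2}\int_0^z a(s)\id s\big)>0$, this becomes $(\rho w')'=\frac{2r}{\sigma^2}\rho w$; multiplying by $w$, integrating over $[0,d]$ and integrating by parts (the boundary terms vanish by $w'(0)=0$, $w(d)=0$) yields $-\int_0^d\rho\,(w')^2=\frac{2r}{\sigma^2}\int_0^d\rho\,w^2$, which forces both sides to vanish since $\rho>0$ and $r>0$; hence $w\equiv0$. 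So $f^{(1)}=f^{(2)}$, and in particular the breakpoint $z_f^C$ is unique.

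\textbf{Monotonicity and the value $z_f^{\xi_1(\beta)}=d$.} Here $\bar{f}_{C,y}$ does not depend on $C$ and $\ubar{f}_{C,y}$ is affine in $C$ with $\partial_C\ubar{f}_{C,y}(y)>0$ (a short telescoping computation from \eqref{fupper}), so $C\mapsto\Phi_C(y)$ is strictly decreasing for each $y$. Thus for $C_1<C_2<\xi_1(\beta)$ one has $\Phi_{C_2}(z_f^{C_1})<\Phi_{C_1}(z_f^{C_1})=0$, while $\Phi_{C_2}>0$ on $(z_f^{C_2},d)$ and $\Phi_{C_2}(z_f^{C_2})=0$; this forces $z_f^{C_1}<z_f^{C_2}$, so $C\mapsto z_f^C$ is increasing. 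Finally $\Phi_{\xi_1(\beta)}(d)=\xi_1(\beta)-\xi_1(\beta)=0$, so $d$ is the largest — hence, by uniqueness, the — breakpoint at $C=\xi_1(\beta)$, i.e. $z_f^{\xi_1(\beta)}=d$ (the corresponding $f_C$ having no $\ubar{f}$‑piece, it coincides with the solution of Proposition~\ref{prop:Cbiggerxi} at $C=\xi_1(\beta)$); joint continuity of $(C,y)\mapsto\Phi_C(y)$ together with uniqueness then gives $z_f^C\uparrow d$ as $C\uparrow\xi_1(\beta)$, in line with Remark~\ref{rem:1}.
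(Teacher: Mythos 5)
Your proof is correct in substance, and apart from the intermediate\--value step for existence (which is identical to the paper's: $\Phi_C(y)\to-\infty$ as $y\downarrow 0$ and $\Phi_C(d)=\xi_1(\beta)-C>0$) it takes a genuinely different route. The paper converts the smooth\--fit condition $\bar f_C''(z_f^C)=\ubar f_C''(z_f^C)$ into the explicit relation \eqref{eq:zfC}, proves that $\rho_1,\rho_2$ are increasing, and reads off uniqueness of $z_f^C$, monotonicity of $C\mapsto z_f^C$ and $z_f^{\xi_1(\beta)}=d$ from that single monotone equation; the inequality $\ubar f_C'\le-\beta$ on $(z_f^C,d]$ is then checked from the closed form \eqref{eq:dfc} by showing $\tau(\cdot,z_f^C)$ is increasing. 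You instead prove uniqueness for the whole boundary value problem \eqref{problem1} via an energy identity (legitimate: solutions of \eqref{problem1} are $C^2$ by definition, and since $x\mapsto u_0(\beta+x)^+$ is Lipschitz the difference of two solutions satisfies a linear equation with bounded measurable drift), deduce $\ubar f_C''(z_f^C)\le 0$ at the largest zero of $\Phi_C$ from the parameter\--derivative identity $\Phi_C'(z_f^C)=-(c_1+c_2)\ubar f_C''(z_f^C)$, obtain $\ubar f_C'\le-\beta$ by a first\--return argument, and get monotonicity of $C\mapsto z_f^C$ from strict monotonicity of $C\mapsto\Phi_C(y)$. I verified your key identity: differentiating the boundary conditions in the parameter $y$ shows that $\partial_y\bar f_{C,y}$ and $\partial_y\ubar f_{C,y}$ solve the homogeneous equations with $p'(0)=0$, $p'(y)=-\bar f_{C,y}''(y)$ and $q(d)=0$, $q'(y)=-\ubar f_{C,y}''(y)$, which gives exactly your formula with $c_1,c_2>0$; it is correct, but it is the step carrying most of the weight and is only sketched. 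Your route is more conceptual and avoids the paper's lengthy explicit computations with $\rho_1,\rho_2$ and $\tau$; the paper's computations, however, produce \eqref{eq:zfC} and \eqref{eq:dfc} in a form that is reused later (Lemma \ref{lemma:dfg} and Theorem \ref{thm:drawdownsensitive}), so on your route those formulas would still have to be derived at that point.

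Two small points need patching. First, the first\--return argument tacitly assumes that $\ubar f_C'$ drops strictly below $-\beta$ immediately to the right of $z_f^C$; with only $\ubar f_C''(z_f^C)\le 0$ this is not automatic when equality holds. Either add the one\--line remark that in that case $\frac{\sigma^2}{2}\ubar f_C'''(z_f^C)=r\ubar f_C'(z_f^C)+\mu\ubar f_C''(z_f^C)=-r\beta<0$, or, better, observe that $\bar f_{C,y}''(y)=-\beta\,\frac{\theta_1(u_0)\e^{\theta_1(u_0)y}+\theta_2(u_0)\e^{-\theta_2(u_0)y}}{\e^{\theta_1(u_0)y}-\e^{-\theta_2(u_0)y}}<0$ for every $y>0$, so at any zero of $\Phi_C$ the matched second derivative is strictly negative; this removes the corner case and, combined with your formula for $\Phi_C'$, even shows directly that $\Phi_C$ has exactly one zero. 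Second, the energy argument gives uniqueness of the solution function; to conclude uniqueness of the breakpoint add that two different breakpoints would produce different functions, since on the overlap a function of type \eqref{sol1a} would have to coincide with one of type \eqref{sol1b}, which is impossible because the exponents are pairwise distinct and $\beta u_0/r\neq 0$.
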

\begin{proof}
From Proposition~\ref{prop:Cbiggerxi} we know that a function of type
\eqref{sol1b} cannot be a solution to \eqref{problem1}, since the condition
$C<\xi_1(\beta)$ violates $f_C'(z)\geq -\beta$ for all $z\in[0,d]$. By Lemma
\ref{lemma:formofh} the solution is then of the form
\[
f_C(z) = \bar{f}_C(z)\one_{[0,z_f^C]}(z)+\ubar{f}_C\one_{(z_f^C,d]}(z)\;,
\]
where $\bar{f}_C$ is of type \eqref{sol1b} and $\ubar{f}_C$ is of type \eqref{sol1a}. By using the conditions $\bar{f}_C'(0)=0$, $\ubar{f}(d)=C$ and $\bar{f}_C'(z_f^C)=\ubar{f}_C'(z_f^C)=-\beta$, we find the expressions \eqref{flower} and \eqref{fupper}. Since $z_f^C\mapsto\bar{f}_C(z_f^C)-\ubar{f}_C(z_f^C)$ is continuous and 
\[
(\bar{f}_C(z_f^C)-\ubar{f}_C(z_f^C))\big\vert_{z_f^C=0} = -\infty <0
\]
and
\[
(\bar{f}_C(z_f^C)-\ubar{f}_C(z_f^C))\big\vert_{z_f^C=d} = \xi_1(\beta) - C > 0\;,
\]
there exists $z_f^C\in(0,d)$ such that
$\bar{f}_C(z_f^C)=\ubar{f}_C(z_f^C)$. Note that this and the HJB equation
imply that $\bar{f}_C''(z_f^C)=\ubar{f}_C''(z_f^C)$. The latter can be
rewritten into the equation
\begin{eqnarray*}
-\beta\frac{\theta_1(u_0)\e^{\theta_1(u_0)z_f^C}+\theta_2(u_0)\e^{-\theta_2(u_0)z_f^C}}{\e^{\theta_1(u_0)z_f^C}-\e^{-\theta_2(u_0)z_f^C}} &=& C\;\frac{\theta_1(0)\theta_2(0)(\theta_1(0)+\theta_2(0))\e^{\theta_1(0)z_f^C}\e^{-\theta_2(0)(z_f^C-d)}}{\theta_1(0)\e^{\theta_1(0)z_f^C}+\theta_2(0)\e^{(\theta_1(0)+\theta_2(0))d}\e^{-\theta_2(0)z_f^C}} \\
&\hspace{1cm}& - \beta \frac{\theta_1(0)^2\e^{\theta_1(0)z_f^C}-\theta_2(0)^2\e^{(\theta_1(0)+\theta_2(0))d}\e^{-\theta_2(0)z_f^C}}{\theta_1(0)\e^{\theta_1(0)z_f^C}+\theta_2(0)\e^{(\theta_1(0)+\theta_2(0))d}\e^{-\theta_2(0)z_f^C}}
\end{eqnarray*}
or equivalently
\begin{equation}
C = \frac{\beta}{\theta_1(0)+\theta_2(0)} \Bigl( \frac{\rho_1(z_f^C)}{\theta_2(0)\e^{\theta_2(0)d}}+\frac{\rho_2(z_f^C)}{\theta_1(0)\e^{-\theta_1(0)d}}\Bigr)\;, \label{eq:zfC}
\end{equation}
where
\begin{eqnarray*}
\rho_1(z) &:=&
\frac{(\theta_1(0)-\theta_1(u_0))\e^{(\theta_1(u_0)+\theta_2(0))z}-(\theta_1(0)+\theta_2(u_0))\e^{(\theta_2(0)-\theta_2(u_0))z}}{\e^{\theta_1(u_0)z}-\e^{-\theta_2(u_0)z}}\\ 
\noalign{\noindent\text{and}} \\
\rho_2(z) &:=& \frac{(\theta_2(0)-\theta_2(u_0))\e^{-(\theta_1(0)+\theta_2(u_0))z}-(\theta_2(0)+\theta_1(u_0))\e^{(\theta_1(u_0)-\theta_1(0))z}}{\e^{\theta_1(u_0)z}-\e^{-\theta_2(u_0)z}}\;.
\end{eqnarray*}
In order to show uniqueness of $z_f^C$, we show that $z\mapsto\rho_1(z)$ and $z\mapsto\rho_2(z)$ are increasing. Differentiating with respect to $z$ and rearranging the terms yields for $\rho_1$:
\begin{eqnarray*}
\rho_1'(z)&=&\frac{\e^{\theta_2(0)z}}{(\e^{\theta_1(u_0)z}-\e^{-\theta_2(u_0)z})^2}\bigl[\e^{(\theta_1(u_0)-\theta_2(u_0))z}\bigl(\theta_1(u_0)^2+\theta_2(u_0)(\theta_2(u_0)-\theta_2(0))+\theta_1(u_0)\theta_2(0) \bigr) \\
&& \hskip2cm {}+ \e^{2\theta_1(u_0)z}\theta_2(0)(\theta_1(0)-\theta_1(u_0)) + \e^{-2\theta_2(u_0)z}\theta_2(0)(\theta_1(0)+\theta_2(u_0))\bigr] > 0\;,
\end{eqnarray*}
where we used $\theta_1(0)>\theta_1(u_0)$ and $\theta_2(0)<\theta_2(u_0)$. For $\rho_2$ we find
\begin{eqnarray*}
\rho_2'(z)&=&\frac{\e^{-\theta_1(0)z}}{(\e^{\theta_1(u_0)z}-\e^{-\theta_2(u_0)z})^2}\bigl[\e^{-2\theta_2(u_0)z}\theta_1(0)\theta_2(0)+\bigl(\e^{(\theta_1(u_0)-\theta_2(u_0))z}-\e^{-2\theta_2(u_0)z}\bigr)\theta_1(0)\theta_2(u_0) \\
&&\hskip.5cm {}+\bigl(\e^{2\theta_1(u_0)z}-\e^{(\theta_1(u_0)-\theta_2(u_0))z}\bigr)\theta_1(0)(\theta_1(u_0)+\theta_2(0))\\
&&\hskip1cm {}+ \e^{(\theta_1(u_0)-\theta_2(u_0))z}(\theta_1(u_0)^2+\theta_2(u_0)^2+\theta_1(0)\theta_2(0)) \bigr]>0\;.
\end{eqnarray*}
From this we can conclude that the right side of equation \eqref{eq:zfC} is
strictly increasing in $z_f^C$. This shows that $z_f^C$ has to be unique and
that $C\mapsto z_f^C$ is increasing. Moreover, setting $z_f^C=d$ in
\eqref{eq:zfC} yields for the right-hand side
\begin{eqnarray*}
C &=& \frac{\beta}{\theta_1(0)\theta_2(0)}\Bigl(\theta_1(0)-\theta_2(0)-\frac{\theta_1(u_0)\e^{\theta_1(u_0)d}+\theta_2(u_0)\e^{-\theta_2(u_0)d}}{\e^{\theta_1(u_0)d}-\e^{-\theta_2(u_0)d}} \Bigr) \\
&=& \beta\Bigl(\frac{\mu}{r}-\frac{\e^{\theta_1(u_0)d}+\frac{\theta_1(u_0)}{\theta_2(u_0)}\e^{-\theta_2(u_0)d}}{\theta_1(u_0)(\e^{\theta_1(u_0)d}-\e^{-\theta_2(u_0)d})} + \frac{\theta_1(u_0)-\theta_2(u_0)}{\theta_1(u_0)\theta_2(u_0)} \Bigr) \\
&=& \frac{\beta u_0}{r}-\beta
\frac{\e^{\theta_1(u_0)d}+\frac{\theta_1(u_0)}{\theta_2(u_0)}\e^{-\theta_2(u_0)d}}{\theta_1(u_0)(\e^{\theta_1(u_0)d}-\e^{-\theta_2(u_0)d})}
= \xi_1(\beta)
\end{eqnarray*}
and therefore $z_f^{\xi_1(\beta)}=d$.
Finally, we need to show that $f_C(z) = \bar{f}_C(z)\one_{[0,z_f^C]}(z)+\ubar{f}_C\one_{(z_f^C,d]}(z)$ is a solution to \eqref{problem1}, that is, $\bar{f}_C'(z)\geq -\beta$ for all $z\in[0,z_f^C]$ and $\ubar{f}_C'(z) \leq -\beta$ for all $z\in[z_f^C,d]$. We note that
\[
\bar{f}_C'(z) = -\beta \frac{\e^{\theta_1(u_0)z}-\e^{-\theta_2(u_0)z}}{\e^{\theta_1(u_0)z_f^C}-\e^{-\theta_2(u_0)z_f^C}} \geq - \beta\;, \qquad z\in[0,z_f^C]\;.
\]
Moreover,       
\begin{eqnarray*}
\ubar{f}_C'(z) &=& \frac{C\theta_2(0)\e^{-\theta_2(0)(z_f^C-d)}-\beta}{\theta_1(0)\e^{\theta_1(0)z_f^C}+\theta_2(0)\e^{(\theta_1(0)+\theta_2(0))d}\e^{-\theta_2(0)z_f^C}}(\theta_1(0)\e^{\theta_1(0)z}+\theta_2(0)\e^{(\theta_1(0)+\theta_2(0))d}\e^{-\theta_2(0)z}) \\
&&\hskip2cm {}-\theta_2(0) C\e^{-\theta_2(0)(z-d)}\;.
\end{eqnarray*}
By replacing $C$ with the expression from \eqref{eq:zfC}, we can rewrite $\ubar{f}_C'(z)$ as follows:
\begin{equation}
\ubar{f}_C'(z) = -\beta \frac{\tau(z,z_f^C)}{(\theta_1(0)+\theta_2(0))\e^{\theta_1(0)z_f^C}(\e^{\theta_1(u_0)z_f^C}-\e^{-\theta_2(u_0)z_f^C})}\;, \label{eq:dfc}
\end{equation}
 where 
 \begin{eqnarray}
\tau(z,z_f^C) &:=& \e^{\theta_1(0)z}\bigl((\theta_1(u_0)+\theta_2(0))\e^{\theta_1(u_0)z_f^C}-(\theta_2(0)-\theta_2(u_0))\e^{-\theta_2(u_0)z_f^C} \bigr) \nonumber \\
&+& \e^{-\theta_2(0)z}\e^{(\theta_1(0)+\theta_2(0))z_f^C}\bigl((\theta_1(0)-\theta_1(u_0))\e^{\theta_1(u_0)z_f^C}-(\theta_1(0)+\theta_2(u_0))\e^{-\theta_2(u_0)z_f^C} \bigr)\;. \label{eq:tau}
 \end{eqnarray}
Since 
\begin{eqnarray*}
\frac{\id{}}{\id z}\tau(z,z_f^C) &=&  \theta_1(0)\e^{\theta_1(0)z}\bigl((\theta_1(u_0)+\theta_2(0))\e^{\theta_1(u_0)z_f^C}-(\theta_2(0)-\theta_2(u_0))\e^{-\theta_2(u_0)z_f^C} \bigr) \\
&& \hspace{0.25cm} {}- \theta_2(0)\e^{-\theta_2(0)z}\e^{(\theta_1(0)+\theta_2(0))z_f^C}\bigl((\theta_1(0)-\theta_1(u_0))\e^{\theta_1(u_0)z_f^C}-(\theta_1(0)+\theta_2(u_0))\e^{-\theta_2(u_0)z_f^C} \bigr) \\
&=& \theta_1(u_0)\e^{\theta_1(u_0)z_f^C}(\theta_1(0)\e^{\theta_1(0)z}+\theta_2(0)\e^{(\theta_1(0)+\theta_2(0))z_f^C}\e^{-\theta_2(0)z})\\
&& \hspace{0.25cm} {}+\theta_1(0)(\theta_2(u_0)-\theta_2(0))\e^{\theta_1(0)z}\e^{-\theta_2(u_0)z_f^C}+\theta_1(0)\theta_2(0)\e^{\theta_1(u_0)z_f^C}(\e^{\theta_1(0)z}-\e^{\theta_1(0)z_f^C}\e^{-\theta_2(0)(z-z_f^C)})\\
&& \hspace{0.25cm} {}+\theta_2(0)(\theta_1(0)+\theta_2(u_0))\e^{(\theta_1(0)+\theta_2(0))z_f^C}\e^{-\theta_2(0)z}\e^{-\theta_2(u_0)z_f^C} > 0\;,
\end{eqnarray*}
where we used $\theta_2(u_0)>\theta_2(0)$ and $z> z_f^C$, we find that $\tau(z)$ is increasing in $z$ and therefore $\ubar{f}_C'(z) < - \beta$ for all $z\in(z_f^C,d]$.
 \end{proof}
 \begin{prop} \label{prop:zg}
If $C>\xi_2(\beta)$, then there exists a unique $z_g^C\in(d,\infty)$ such that the function $g_C(z)=\ubar{g}_C(z)\one_{[d,z_g^C]}(z)+\bar{g}_C(z)\one_{(z_g^C,\infty)}(z)$ is bounded and solves \eqref{problem2} with $g_C(d)=C$, where
\begin{alpheq}\label{glu}
\begin{eqnarray}
\ubar{g}_C(z) &=& -\frac{1}{r}+\frac{C+\frac{1}{r}-\frac{\beta}{\theta_2(0)}\e^{-\theta_2(0)(d-z_g^C)}}{\e^{\theta_1(0)d}+\frac{\theta_1(0)}{\theta_2(0)}\e^{(\theta_1(0)+\theta_2(0))z_g^C}\e^{-\theta_2(0)d}}\bigr(\e^{\theta_1(0)z}+\frac{\theta_1(0)}{\theta_2(0)}\e^{(\theta_1(0)+\theta_2(0))z_g^C}\e^{-\theta_2(0)z}\bigl)\nonumber\\
&&\hskip1cm +\frac{\beta}{\theta_2(0)}\e^{-\theta_2(0)(z-z_g^C)} \label{glower} \\
\noalign{\noindent\text{\rm and}\nonumber}\\
\bar{g}_C(z) &=& \frac{\beta u_0 - 1}{r}+ \frac{\beta}{\theta_2(u_0)}\e^{-\theta_2(u_0)(z-z_g^C)}\;. \label{gupper}
\end{eqnarray}
\end{alpheq}
Moreover, the function $C\mapsto z_g^C$ is increasing and $z_g^{\xi_2(\beta)}=d$. 
\end{prop}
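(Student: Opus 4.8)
The plan is to mirror the structure of Proposition~\ref{prop:zf} exactly, using the boundary/gluing conditions to reduce the existence of a single crossing point $z_g^C$ to a monotonicity statement about an auxiliary function. First I would observe that, by Proposition~\ref{prop:Cbiggerxi}, a pure type-\eqref{sol2b} solution $\bar{g}_C$ with $g_C(d)=C$ fails the requirement $g_C'(z)\ge-\beta$ precisely when $C>\xi_2(\beta)$; hence by Lemma~\ref{lemma:formofh} (valid here since its proof used only the bounded/decreasing structure, not $\ubar{f}'(d)=\ubar{g}'(d)$) the solution must take the form $g_C=\ubar{g}_C\one_{[d,z_g^C]}+\bar{g}_C\one_{(z_g^C,\infty)}$ with $\ubar{g}_C$ of type \eqref{sol2a} and $\bar{g}_C$ of type \eqref{sol2b}. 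The four conditions $g_C(d)=C$, $\bar{g}_C(\infty)<\infty$ (forcing the $\e^{\theta_1(u_0)z}$-coefficient in $\bar{g}_C$ to vanish), and the $C^1$-gluing $\ubar{g}_C'(z_g^C)=\bar{g}_C'(z_g^C)=-\beta$ then pin down all free constants and yield the stated formulas \eqref{glower}--\eqref{gupper}; this is routine linear algebra.

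Next I would set up the equation for $z_g^C$ itself. The remaining condition is continuity $\ubar{g}_C(z_g^C)=\bar{g}_C(z_g^C)$ (which, together with the HJB equation, automatically forces $\ubar{g}_C''(z_g^C)=\bar{g}_C''(z_g^C)$, so $g_C$ is genuinely twice continuously differentiable except at $d$). Rearranging this equation I expect to obtain an expression of the shape $C = \frac{\beta u_0-1}{r} + \beta\,\Psi(z_g^C)$ for some explicit function $\Psi$ built from $\theta_i(0),\theta_i(u_0)$ and exponentials $\e^{\pm\theta_i(\cdot)z_g^C}$, analogous to \eqref{eq:zfC}; existence and uniqueness of $z_g^C\in(d,\infty)$ will then follow from showing $\Psi$ (equivalently, the relevant combination $\rho$-type functions) is \emph{strictly increasing} on $(d,\infty)$, continuous, and has the correct boundary behaviour: $\Psi(d)$ gives back $C=\xi_2(\beta)$ (proving $z_g^{\xi_2(\beta)}=d$) while $\Psi(z)\to+\infty$ as $z\to\infty$, so that every $C>\xi_2(\beta)$ is attained exactly once. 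Monotonicity of $C\mapsto z_g^C$ is then immediate from strict monotonicity of $\Psi$.

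Finally I would verify that the resulting $g_C$ is actually a solution to \eqref{problem2} in the required sense, i.e. that $\ubar{g}_C'(z)\le-\beta$ for $z\in[d,z_g^C]$ and $\bar{g}_C'(z)\ge-\beta$ for $z\ge z_g^C$. The latter is easy: from \eqref{gupper}, $\bar{g}_C'(z)=-\beta\e^{-\theta_2(u_0)(z-z_g^C)}\ge-\beta$ on $[z_g^C,\infty)$. For the former I would differentiate \eqref{glower}, substitute the expression for $C$ coming from the gluing equation (as was done for $\ubar{f}_C'$ via $\tau$ in \eqref{eq:dfc}--\eqref{eq:tau}), reduce $\ubar{g}_C'(z)$ to $-\beta$ times a ratio, and show the numerator is monotone in $z$ so that the inequality $\ubar{g}_C'(z)\le-\beta$ propagates from its equality at $z=z_g^C$; here the sign inequalities $\theta_1(0)>\theta_1(u_0)$, $\theta_2(u_0)>\theta_2(0)$ (all consequences of $u_0>0$) will be the workhorses. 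The main obstacle I anticipate is purely computational: controlling the sign of the derivative of the numerator (the analogue of $\frac{\rm d}{{\rm d}z}\tau$), since the relevant expression is a linear combination of several competing exponentials with coefficients of a priori mixed sign, and one has to group the terms cleverly — exactly as in the $\rho_1',\rho_2',\tau'$ computations of Proposition~\ref{prop:zf} — to exhibit it as manifestly positive. The conceptual content is otherwise a straightforward dual of the $z_f^C$ argument, with $[0,d]$ replaced by $[d,\infty)$ and the roles of the two exponential modes swapped so that boundedness at $\infty$ plays the role that the condition at $0$ did before.
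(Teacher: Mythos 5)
Your proposal is correct and follows essentially the same route as the paper: reduce to the glued form via Proposition~\ref{prop:Cbiggerxi} and Lemma~\ref{lemma:formofh}, pin down the constants from $g_C(d)=C$, boundedness and the $C^1$-fit at $z_g^C$, characterise $z_g^C$ through the second-derivative matching equation (the paper's \eqref{eq:zgC}) whose right-hand side is strictly increasing from $\xi_2(\beta)$ at $z_g^C=d$ to $+\infty$, and finally check the slope inequalities. The only (harmless) difference is that you anticipate a $\tau$-style monotonicity computation for $\ubar{g}_C'(z)\le-\beta$, whereas after substituting $C$ the paper's expression \eqref{eq:dgc} is bounded directly, since the first term is nonpositive for $z\le z_g^C$ because $\theta_2(u_0)>\theta_2(0)$.
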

\begin{proof}
From Proposition \ref{prop:Cbiggerxi} we know that a function of type \eqref{sol2b} cannot be a solution to \eqref{problem2}, since the condition $C>\xi_2(\beta)$ violates $f_C'(z)\geq -\beta$ for all $z\in[0,d]$. By Lemma \ref{lemma:formofh}, the solution is then of the form
\[
g_C(z) = \ubar{g}_C(z)\one_{[d,z_g^C]}(z)+\bar{g}_C\one_{(z_g^C,\infty)}(z)\;,
\]
where $\ubar{g}_C$ is of type \eqref{sol2a} and $\bar{g}_C$ is of type
\eqref{sol2b}. By using $g_C(d)=C$,
$\ubar{g}_C'(z_g^C)=\bar{g}_C'(z_g^C)=-\beta$ and $g_C(\infty)<\infty$, we
find the expressions \eqref{glu}. Since
$z_g^C\mapsto\ubar{g}_C(z_g^C)-\bar{g}_C(z_g^C)$ is continuous and
\[
(\ubar{g}_C(z_g^C)-\bar{g}_C(z_g^C))\big\vert_{z_g^C=d} = C - \xi_2(\beta) > 0\;,
\]
and
\[
(\ubar{g}_C(z_g^C)-\bar{g}_C(z_g^C))\big\vert_{z_g^C=\infty} = -\frac{1}{r} -\frac{\beta}{\theta_1(0)} -\xi_2(\beta) <0\;,
\]
there exists $z_g^C\in(d,\infty)$ such that $\ubar{g}_C(z_g^C)=\bar{g}_C(z_g^C)$. In order to show uniqueness of $z_g^C$, note that the HJB equation \eqref{HJB} and the properties of $z_g^C$ imply $\ubar{g}''_C(z_g^C)=\bar{g}''_C(z_g^C)$. The latter can be rewritten into the equation
\begin{eqnarray*}
\beta\theta_2(u_0) = \frac{C+\frac{1}{r}-\frac{\beta}{\theta_2(0)}\e^{-\theta_2(0)(d-z_g^C)}}{\e^{\theta_1(0)d}+\frac{\theta_1(0)}{\theta_2(0)}\e^{(\theta_1(0)+\theta_2(0))z_g^C}\e^{-\theta_2(0)d}}\theta_1(0)(\theta_1(0)+\theta_2(0))\e^{\theta_1(0)z_g^C} + \beta\theta_2(0)
\end{eqnarray*}
or equivalently
\begin{equation}
C = \frac{\beta(\theta_2(u_0)-\theta_2(0))}{\theta_1(0)(\theta_1(0)+\theta_2(0))}\bigl( \e^{\theta_1(0)(d-z_g^C)}+\frac{\theta_1(0)}{\theta_2(0)}\e^{-\theta_2(0)(d-z_g^C)}\bigr)-\frac{1}{r}+\frac{\beta}{\theta_2(0)}\e^{-\theta_2(0)(d-z_g^C)}\;. \label{eq:zgC}
\end{equation}
The right-hand side is strictly increasing in $z_g^C$. This implies that $z_g^C$ is unique and that $C\mapsto z_g^C$ is increasing. Moreover, for $z_g^C=d$ the right-hand side of \eqref{eq:zgC} becomes 
\begin{eqnarray*}
\lefteqn{\frac{\beta(\theta_2(u_0)-\theta_2(0))}{\theta_1(0)\theta_2(0)}-\frac{1}{r}+\frac{\beta}{\theta_2(0)} = \beta \Bigl[\frac{\theta_2(u_0)+\theta_1(0)-\theta_2(0)}{\theta_1(0)\theta_2(0)} \Bigr] -\frac{1}{r}}\hskip1cm \\
&=& \beta\Bigl[\frac{\sqrt{(\mu-u_0)^2+2r\sigma^2}+\mu+u_0}{2r} \Bigr] -\frac{1}{r}
= \beta\Bigl[\frac{u_0}{r}+\frac{\sigma^2}{2r} \theta_1(u_0)\Bigr]
-\frac{1}{r}\\ &=& \frac{\beta u_0-1}{r}+\frac{\beta}{\theta_2(u_0)} = \xi_2(\beta)\;.
\end{eqnarray*}
This proves that $z_g^{\xi_2(\beta)}=d$. Finally, we need to show that $g_C(z) = \ubar{g}_C(z)\one_{[d,z_g^C]}(z)+\bar{g}_C\one_{(z_g^C,\infty)}(z)$ is a solution to \eqref{problem2}, that is, $\ubar{g}_C'(z)\leq -\beta$ for all $z\in[d,z_g^C]$ and $\bar{g}_C'(z) \geq -\beta$ for all $z\in[z_g^C,\infty)$. For $z\in[z_g^C,\infty)$ we note that
\[
\bar{g}_C'(z) = -\beta\e^{-\theta_2(u_0)(z-z_g^C)} \geq -\beta \;.
\]
For $z\in[d,z_g^C]$ we get
\begin{eqnarray*}
\ubar{g}_C'(z) &=& \frac{C+\frac{1}{r}-\frac{\beta}{\theta_2(0)}\e^{-\theta_2(0)(d-z_g^C)}}{\e^{\theta_1(0)d}+\frac{\theta_1(0)}{\theta_2(0)}\e^{(\theta_1(0)+\theta_2(0))z_g^C}\e^{-\theta_2(0)d}}\theta_1(0)\bigr(\e^{\theta_1(0)z}-\e^{(\theta_1(0)+\theta_2(0))z_g^C}\e^{-\theta_2(0)z}\bigl) -\beta\e^{-\theta_2(0)(z-z_g^C)}
\end{eqnarray*}
By replacing $C$ with the expression in \eqref{eq:zgC}, we find
\begin{eqnarray}
\ubar{g}_C'(z) &=& \beta \frac{\theta_2(u_0)-\theta_2(0)}{\theta_1(0)+\theta_2(0)}(\e^{-\theta_1(0)(z_g^C-z)}-\e^{\theta_2(0)(z_g^C-z)})-\beta\e^{\theta_2(0)(z_g^C-z)} \label{eq:dgc}\\
&\leq& -\beta \e^{\theta_2(0)(z_g^C-z)} \leq -\beta \nonumber\;.
\end{eqnarray}
This shows that $g_C$ solves \eqref{problem2}.
 \end{proof}
The following result will enable us to find $C$ in a way such that $\ubar{f}_C'(d)=\ubar{g}_C'(d)$.
\begin{lemma} \label{lemma:dfg}
Let $\beta<\zeta$. 
\begin{enumerate}
\item The function $(\xi_2(\beta),\xi_1(\beta)) \to !!R$, $C\mapsto \ubar{f}_C'(d)$ is strictly increasing and $\ubar{f}_{\xi_1(\beta)}'(d)=-\beta$.
\item The function $(\xi_2(\beta),\xi_1(\beta)) \to !!R$, $C\mapsto \ubar{g}_C'(d)$ is strictly decreasing and $\ubar{g}_{\xi_2(\beta)}'(d)=-\beta$.
\end{enumerate}
\end{lemma}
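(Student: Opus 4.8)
The plan is to read both statements off the closed forms \eqref{eq:dfc} and \eqref{eq:dgc} for $\ubar{f}_C'$ and $\ubar{g}_C'$, combined with the strict monotonicity of $C\mapsto z_f^C$ and $C\mapsto z_g^C$ from Propositions \ref{prop:zf} and \ref{prop:zg}. Statement ii) comes out essentially by inspection; statement i) needs in addition a short comparison argument for the linear equation on $[z_f^C,d]$.

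For ii) I would set $z=d$ in \eqref{eq:dgc} and abbreviate $w:=z_g^C-d\ge 0$, so that
\[
\ubar{g}_C'(d)=\beta\,\frac{\theta_2(u_0)-\theta_2(0)}{\theta_1(0)+\theta_2(0)}\bigl(\e^{-\theta_1(0)w}-\e^{\theta_2(0)w}\bigr)-\beta\,\e^{\theta_2(0)w}\,.
\]
Since $\theta_2(u_0)>\theta_2(0)$ (as already used earlier in this section), the prefactor is positive, and both $w\mapsto\e^{-\theta_1(0)w}-\e^{\theta_2(0)w}$ and $w\mapsto-\e^{\theta_2(0)w}$ are strictly decreasing on $[0,\infty)$; hence $w\mapsto\ubar{g}_C'(d)$ is strictly decreasing. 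The right-hand side of \eqref{eq:zgC} is strictly increasing in $z_g^C$, so $C\mapsto z_g^C$, and therefore $C\mapsto w$, is strictly increasing, which makes $C\mapsto\ubar{g}_C'(d)$ strictly decreasing. Evaluating at $C=\xi_2(\beta)$, where $z_g^{\xi_2(\beta)}=d$ so $w=0$, gives $\ubar{g}_{\xi_2(\beta)}'(d)=-\beta$.

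For i) the boundary value is immediate: the $C^1$-gluing forces $\ubar{f}_C'(z_f^C)=-\beta$ for every admissible $C$, and $z_f^{\xi_1(\beta)}=d$ by Proposition \ref{prop:zf}, so $\ubar{f}_{\xi_1(\beta)}'(d)=-\beta$. For the monotonicity I would reparametrise by $s:=z_f^C$, a strictly increasing function of $C$ by Proposition \ref{prop:zf}, and show $s\mapsto\ubar{f}_C'(d)$ is strictly increasing. Fix $s_1<s_2\le d$ and put $w:=\ubar{f}_{C(s_2)}-\ubar{f}_{C(s_1)}$ on $[s_2,d]$; both functions solve $\tfrac{\sigma^2}{2}y''-\mu y'-ry=0$, so $w(z)=\alpha\e^{\theta_1(0)z}+\gamma\e^{-\theta_2(0)z}$ for constants $\alpha,\gamma$. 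At $z=s_2$ one has $\ubar{f}_{C(s_2)}(s_2)=\bar{f}_{C(s_2)}(s_2)$, which by \eqref{flower} equals $G(s_2)$ where
\[
G(s):=\frac{\beta u_0}{r}-\frac{\beta}{\theta_1(u_0)\theta_2(u_0)}\cdot\frac{\theta_2(u_0)\e^{\theta_1(u_0)s}+\theta_1(u_0)\e^{-\theta_2(u_0)s}}{\e^{\theta_1(u_0)s}-\e^{-\theta_2(u_0)s}}\,,
\]
and a one-line computation shows $G'(s)>0$ (the fraction has derivative $-(\theta_1(u_0)+\theta_2(u_0))^2\e^{(\theta_1(u_0)-\theta_2(u_0))s}/(\e^{\theta_1(u_0)s}-\e^{-\theta_2(u_0)s})^2<0$). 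Since $\ubar{f}_{C(s_1)}$ is part of a strictly decreasing solution (Lemma \ref{lemma:strictlydecreasing}), $\ubar{f}_{C(s_1)}(s_2)<\ubar{f}_{C(s_1)}(s_1)=G(s_1)<G(s_2)$, so $w(s_2)>0$; moreover $\ubar{f}_{C(s_2)}'(s_2)=-\beta$ while Proposition \ref{prop:zf} gives $\ubar{f}_{C(s_1)}'(s_2)<-\beta$ because $s_2>z_f^{C(s_1)}$, so $w'(s_2)>0$. From $w(s_2)>0$ and $w'(s_2)>0$ we get $\theta_2(0)w(s_2)+w'(s_2)=(\theta_1(0)+\theta_2(0))\alpha\e^{\theta_1(0)s_2}>0$, hence $\alpha>0$, and then $w'(z)=\alpha\theta_1(0)\e^{\theta_1(0)z}-\gamma\theta_2(0)\e^{-\theta_2(0)z}>0$ for all $z\in[s_2,d]$ (if $\gamma\le 0$ both summands are nonnegative with the first positive; if $\gamma>0$ then $w'$ is increasing and $w'(s_2)>0$). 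In particular $w'(d)=\ubar{f}_{C(s_2)}'(d)-\ubar{f}_{C(s_1)}'(d)>0$, which is the claim.

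The computation of the sign of $G'$ and of $\theta_2(u_0)-\theta_2(0)$ is routine and of the same flavour as those already carried out in this section. The one genuinely new ingredient is the comparison step in i): it requires keeping track of which of $\bar{f}$ and $\ubar{f}$ carries which boundary datum, the observation that $\bar{f}_{C(s)}(s)$ depends on $C$ only through $s=z_f^C$, and the use of the explicit two-exponential form of solutions to turn $w(s_2),w'(s_2)>0$ into $w'>0$ on all of $[s_2,d]$. I expect that to be the only delicate point.
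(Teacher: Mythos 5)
Your proposal is correct. Part ii) is essentially the paper's own argument: the paper differentiates the expression \eqref{eq:dgc} with respect to $z_g^C$ and combines the negative sign with the strict monotonicity of $C\mapsto z_g^C$; you read off the same monotonicity in the variable $w=z_g^C-d$ by inspection, which is the same route. Part i) is where you genuinely diverge. The paper evaluates \eqref{eq:dfc} at $z=d$, treats it as a function $k(z_f^C)$ built from $\tau(d,z_f^C)$ in \eqref{eq:tau}, and verifies $k'(z_f^C)>0$ by an explicit (fairly heavy) differentiation, then chains with $\tfrac{\id}{\id C}z_f^C>0$. You instead reparametrise by $s=z_f^C$, observe that the gluing value $\bar f_C(z_f^C)=G(s)$ depends on $C$ only through $s$ and is strictly increasing, and run a comparison argument: the difference $w=\ubar f_{C(s_2)}-\ubar f_{C(s_1)}$ solves the homogeneous equation with $u=0$, so it has the two-exponential form, and from $w(s_2)>0$, $w'(s_2)>0$ you extract $\alpha>0$ and hence $w'>0$ on $[s_2,d]$, giving strict monotonicity of $\ubar f_C'(d)$ without ever differentiating the closed-form coefficient. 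Your route buys a shorter, more structural proof that avoids the algebra around \eqref{eq:tau}; the paper's route buys an explicit formula for the derivative of $\ubar f_C'(d)$ in terms of $z_f^C$, which it reuses nowhere else, so nothing is lost by your approach. Two small points to tidy up: the strict decrease of $\ubar f_{C(s_1)}$ on $(s_1,d]$ should be quoted from Proposition~\ref{prop:zf} (where $\ubar f_C'(z)<-\beta<0$ is shown), not from Lemma~\ref{lemma:strictlydecreasing}, which concerns solutions of the full HJB equation \eqref{HJB} rather than of the sub-problem \eqref{problem1}; and you should note explicitly that $C\mapsto z_f^C$ is \emph{strictly} increasing (this follows from the strict monotonicity of the right-hand side of \eqref{eq:zfC}, exactly as the paper itself uses), so that $s_1<s_2$ strictly for $C_1<C_2$.
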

\begin{proof}
We start with i). By Proposition \ref{prop:zf} we have $z_f^{\xi_1(\beta)}=d$. Since $\ubar{f}_C'(z_f^C)=-\beta$ holds by construction, this directly shows $\ubar{f}_{\xi_1(\beta)}'(d)=-\beta$. Moreover, we have seen that $C\mapsto z_f^C$ is increasing and that for fixed $z\in[0,d]$, we can write $\ubar{f}_C'(d)$ as a function $k(z_f^C)$ by equation \eqref{eq:dfc}:
\begin{equation*}
k(z_f^C):= -\beta \frac{\tau(d,z_f^C)}{(\theta_1(0)+\theta_2(0))\e^{\theta_1(0)z_f^C}(\e^{\theta_1(u_0)z_f^C}-\e^{-\theta_2(u_0)z_f^C})}\;,
\end{equation*}
where $\tau(d,z_f^C)$ is defined as in \eqref{eq:tau}. Since
\begin{eqnarray*}
k'(z_f^C) &=& \frac{\beta}{(\theta_1(0)+\theta_2(0))\e^{2\theta_1(0)z_f^C}\bigl(\e^{\theta_1(u_0)z_f^C}-\e^{-\theta_2(u_0)z_f^C}\bigr)^2}\e^{\theta_1(0)z_f^C}\bigl(\theta_1(u_0)\e^{\theta_1(u_0)z_f^C}+\theta_2(u_0)\e^{-\theta_2(u_0)z_f^C}\bigr) \\
&& \hskip.5cm {} \cdot \bigl[\bigl(\e^{\theta_1(0)d}-\e^{\theta_1(0)z_f^C}\e^{-\theta_2(0)(d-z_f^C)} \bigr)\bigl(\theta_2(u_0)\e^{\theta_1(u_0)z_f^C}+\theta_1(u_0)\e^{-\theta_2(u_0)z_f^C}\bigr)  \\
&& \hskip1cm {} + \bigl(\theta_1(0)\e^{\theta_1(0)d}+\theta_2(0)\e^{\theta_1(0)z_f^C}\e^{-\theta_2(0)(d-z_f^C)} \bigr) \bigl(\e^{\theta_1(u_0)z_f^C}-\e^{-\theta_2(u_0)z_f^C} \bigr)\bigr] > 0\;,
\end{eqnarray*}
we have
\[
\frac{\id}{\id C} \ubar{f}_C'(d) = k'(z_f^C) \cdot \bigl(\frac{\id}{\id C} z_f^C \bigr) > 0\;.
\]
Thus, $C\mapsto \ubar{f}_C'(d)$ is increasing. \\
In order to show ii), we note that $z_g^{\xi_2(\beta)}=d$ by Proposition \ref{prop:zg} and therefore $g_{\xi_2(\beta)}'(d) = -\beta$ by construction of $z_g^{\xi_2(\beta)}$. By equation \eqref{eq:dgc}, we have
\begin{equation*}
\frac{\id}{\id z_g^C}\ubar{g}_C'(d) = \beta \frac{\theta_2(u_0)-\theta_2(0)}{\theta_1(0)+\theta_2(0)} (-\theta_1(0)\e^{-\theta_1(0)(z_g^C-d)}-\theta_2(0)\e^{\theta_2(0)(z_g^C-d)})-\beta\theta_2(0)\e^{\theta_2(0)(z_g^C-d)} < 0\
\end{equation*}
and therefore
\[
\frac{\id}{\id C} \ubar{g}_C'(d) = \bigl(\frac{\id}{\id z_g^C}\ubar{g}_C'(d)\bigr) \cdot \bigl(\frac{\id}{\id C} z_g^C \bigr) < 0\;,
\]
where we used that $C\mapsto z_g^C$ is increasing. This shows that $C\mapsto\ubar{g}_C'(d)$ is decreasing.
\end{proof}
Now we have $h=f_C\one_{[0,d]}+g_C\one_{(d,\infty)}$ as a candidate for the value function. Moreover, a candidate for an optimal strategy is given by the feedback strategy $(U^{*}(t)=u^{*}(\Delta_z^{U^{*}}(t)))_{t\geq 0}$, where $u^{*}(z) = u_0 \one_{[0,z_f]\cup[z_g,\infty)}(z)$, $z\geq 0$. In order to show that $h(z)$ is indeed the value function, we have to prove existence of the controlled process. We can follow the approach of Brinker \cite{LVB} and describe $(\Delta_z^{U^{*}},M_z^{U^{*}})$ as a solution $(Z,z+Y)$ to the reflected SDE
\begin{equation}
Z_t = z + Y_t + \int_0^t (u^{*}(Z_s)-\mu) \id s - \sigma W_t\;, \label{SDE}
\end{equation}
where $(Z,Y)$ solves a Skorohod problem. Then, the existence of a pathwise
unique strong solution of \eqref{SDE} follows from \cite[Theorem
4.1]{Rozkosz}, \cite[Corollary 4.3]{Semrau} and \cite[Theorem 1.3]{Situ}. In
particular, $(U^{*}(t))_{t\geq 0}$ is an admissible strategy.

We can now specify the value function in the case $\beta<\zeta$.
\begin{theorem} \label{thm:drawdownsensitive}
If $\beta<\zeta$, then there exists a unique $C\in(\xi_2(\beta),\xi_1(\beta))$, such that 
\begin{equation}
v(z) = \bar{f}_C(z)\one_{[0,z_f^C]}(z)+\ubar{f}_C(z)\one_{(z_f^C,d]}+\ubar{g}_C(z)\one_{(d,z_g^C]}(z)+\bar{g}_C(z)\one_{(z_g^C,\infty)}(z)\;, \qquad z\geq 0\;, \label{solutionbetalargerzeta}
\end{equation}
solves \eqref{HJB},
where $\ubar{f}_C$, $\bar{f}_C$, $\ubar{g}_C$ and $\bar{g}_C$ are as in
Propositions~{\rm\ref{prop:zf}} and {\rm\ref{prop:zg}}. By the Verification
Theorem~{\rm\ref{verification}} $v$ is the value function. 
An optimal strategy is given by $U^{*}(t)=u^{*}(\Delta_z^{U^{*}}(t))$, $t\geq 0$, with the optimiser 
 \[
u^{*}(z) = u_0 \one_{[0,z_f]\cup[z_g,\infty)}(z) = u_0\one_{\{v'(z)\geq-\beta\}}\;.
 \]
\end{theorem}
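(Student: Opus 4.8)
The plan is to glue the two sub-problems together by a continuity argument and then invoke the Verification Theorem~\ref{verification}. First I would record what has already been achieved: for every $C$ in the relevant range, Proposition~\ref{prop:zf} produces a twice continuously differentiable solution $f_C$ of \eqref{problem1} on $[0,d]$ with $f_C'(0)=0$ and $f_C(d)=C$, and Proposition~\ref{prop:zg} produces a twice continuously differentiable solution $g_C$ of \eqref{problem2} on $[d,\infty)$ with $g_C(d)=C$ and $g_C(\infty)<\infty$. Since both are built so that $f_C'(d)=-\beta$ and $g_C'(d)=-\beta$ exactly at the endpoints $\xi_1(\beta)$ and $\xi_2(\beta)$ respectively, the only thing missing for $h=f_C\one_{[0,d]}+g_C\one_{(d,\infty)}$ to be a $C^1$ solution in the sense of Definition~\ref{def:solution} is the matching condition $\ubar f_C'(d)=\ubar g_C'(d)$ at the interior break point $z=d$.

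Next I would locate the correct $C$. By Lemma~\ref{lemma:dfg}, the map $C\mapsto \ubar f_C'(d)$ is strictly increasing on $(\xi_2(\beta),\xi_1(\beta))$ with $\ubar f_{\xi_1(\beta)}'(d)=-\beta$, and $C\mapsto \ubar g_C'(d)$ is strictly decreasing on the same interval with $\ubar g_{\xi_2(\beta)}'(d)=-\beta$. Hence the continuous function $C\mapsto \ubar f_C'(d)-\ubar g_C'(d)$ satisfies, at the endpoints, $\ubar f_{\xi_2(\beta)}'(d)-\ubar g_{\xi_2(\beta)}'(d)<-\beta-(-\beta)=$ something strictly negative (since $\ubar f'_{\xi_2(\beta)}(d)<\ubar f'_{\xi_1(\beta)}(d)=-\beta$) and, analogously, a strictly positive value at $C=\xi_1(\beta)$; by the intermediate value theorem there is a zero, and by strict monotonicity of both maps it is unique. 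This yields a unique $C\in(\xi_2(\beta),\xi_1(\beta))$ with $\ubar f_C'(d)=\ubar g_C'(d)$, and one checks from the HJB equation evaluated from the left and right at $z=d$ (where $h'(d)=-\beta<0$, so the supremum in \eqref{HJB} is attained at $u=0$ on both sides) that the common value of $h''(d\pm)$ need not agree — indeed it cannot, by the remark after Definition~\ref{def:solution} — but $h$ is nonetheless $C^1$ and piecewise $C^2$ as required.

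Then I would verify that this $h$ genuinely solves \eqref{HJB} in the sense of Definition~\ref{def:solution}: conditions \eqref{def:sol:contdiff} and (ii) follow from the gluing just performed together with the regularity of $f_C,g_C$ on $[0,d]$ and $(d,\infty)$; condition \eqref{def:sol:HJB} for $z>d$ holds because $g_C$ solves \eqref{problem2} with the right-hand side $1=\one_{\{z>d\}}$, and the sign constraints $\bar g_C'\geq-\beta$, $\ubar g_C'\leq-\beta$ established in Proposition~\ref{prop:zg} ensure the supremum in \eqref{HJB} is taken at the correct $u$; the analogous facts on $[0,d]$ from Proposition~\ref{prop:zf} give the equation there with right-hand side $0$. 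I would also confirm that $h$ is decreasing: on $[0,z_f^C]$ and $[z_g^C,\infty)$ the derivatives $\bar f_C'$ and $\bar g_C'$ are negative (both expressions are of the form $-\beta\cdot(\text{positive}) $ or manifestly negative), and on $(z_f^C,z_g^C)$ we have $h'\leq-\beta<0$; and $h$ maps into $[(\beta u_0-1)/r,\beta u_0/r]$ by the explicit forms of $\bar f_C$ and $\bar g_C$, with $\lim_{z\to\infty}h(z)=(\beta u_0-1)/r$ from \eqref{gupper}. Finally, the admissibility of $U^*$ follows from the existence and pathwise uniqueness of the reflected SDE \eqref{SDE} cited from \cite{Rozkosz,Semrau,Situ}, and since $h'(0)=f_C'(0)=0$ and $u^*(z)=u_0\one_{\{h'(z)\geq-\beta\}}$ coincides with $u_0\one_{[0,z_f]\cup[z_g,\infty)}$, Theorem~\ref{verification} gives $h=v=v^{U^*}$, proving the claim. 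The main obstacle is not any single step but making sure the endpoint sign inequalities at $C=\xi_1(\beta)$ and $C=\xi_2(\beta)$ in Lemma~\ref{lemma:dfg} combine to give strict sign change of $\ubar f_C'(d)-\ubar g_C'(d)$; once that is clean, everything else is assembly of already-proven facts.
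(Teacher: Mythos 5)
Your proposal is correct and follows essentially the same route as the paper: use Lemma~\ref{lemma:dfg} together with the endpoint identities $\ubar{f}_{\xi_1(\beta)}'(d)=\ubar{g}_{\xi_2(\beta)}'(d)=-\beta$ and strict monotonicity to get a unique $C\in(\xi_2(\beta),\xi_1(\beta))$ with $\ubar{f}_C'(d)=\ubar{g}_C'(d)$, then assemble the solution from Propositions~\ref{prop:zf} and \ref{prop:zg} and conclude via the Verification Theorem~\ref{verification} (with admissibility of $U^{*}$ from the reflected SDE \eqref{SDE}). The only slip is the parenthetical claim that $h'(d)=-\beta$: since the matching $C$ lies strictly below $\xi_1(\beta)$ one has $h'(d)<-\beta$, but this does not affect your argument because the supremum in \eqref{HJB} is attained at $u=0$ in either case.
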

\begin{proof}
In view of Propositions \ref{prop:zf} and \ref{prop:zg}, it remains to show that there exists $C\in(\xi_2(\beta),\xi_1(\beta))$, such that $\ubar{f}_C'(d)=\ubar{g}_C'(d)$. By Lemma \ref{lemma:dfg}, the function $C\mapsto \ubar{f}_C'(d)-\ubar{g}_C'(d)$ is strictly increasing. Moreover, we have $\ubar{f}_{\xi_1(\beta)}'(d)=-\beta$, $\ubar{f}_C'(d) < -\beta$ for $C<\xi_1(\beta)$, $\ubar{g}_{\xi_2(\beta)}'(d)=-\beta$ and $\ubar{g}_C'(d)<-\beta$ for $C> \xi_2(\beta)$, leading to
\begin{equation*}
\ubar{f}_{\xi_2(\beta)}'(d)-\ubar{g}_{\xi_2(\beta)}'(d) < 0 \quad \text{and} \quad \ubar{f}_{\xi_1(\beta)}'(d)-\ubar{g}_{\xi_1(\beta)}'(d) > 0\;.
\end{equation*}
Thus, there exists a unique $C\in(\xi_2(\beta),\xi_1(\beta))$, such that  $\ubar{f}_C'(d)=\ubar{g}_C'(d)$. We can therefore conclude that the right-hand side of \eqref{solutionbetalargerzeta} is a solution to the HJB equation \eqref{HJB}. According to Theorem \ref{verification}, this solution has to be the value function.
\end{proof}
\begin{rem}
The proof of Theorem \ref{thm:drawdownsensitive} also shows that the value function cannot be of the form $$h(z)=\bar{f}_C(z)\one_{[0,z_f^C]}(z)+\ubar{f}_C(z)\one_{(z_f^C,d]}(z)+\bar{g}_C(z)\one_{(d,\infty)}(z)$$ or $$h(z)=\bar{f}_C(z)\one_{[0,d]}(z)+\ubar{g}_C(z)\one_{(d,z_g^C]}(z)+\bar{g}_C(z)\one_{(z_g^C,\infty)}(z)\;.$$ For the first case we need $\ubar{f}_C'(d)=\bar{g}_C'(d)=-\beta$ and in the second case $\bar{f}_C'(d)=\ubar{g}_C'(d)=-\beta$. We can see in the proof above, that this can only be fulfilled if $\xi_1(\beta)=\xi_2(\beta)$, which is equal to the case $\beta=\zeta$. Note that $C=\xi_1(\beta)=\xi_2(\beta)$ also implies $z_f^C=z_g^C=d$, which means that we seamlessly transition to the case $\beta\geq\zeta$.
\hfill $\blacksquare$
\end{rem}
We can interpret this result as follows: If dividends are weighted moderately,
no dividends are paid if the drawdown is close to the critical boundary in
order to get away from it, or leave the critical area, respectively,
as fast as possible. Close to the running maximium, the value of the dividends
exceed the danger of falling below the critical boundary. At a certain distance to
the critical boundary in the critical area, one will spend so much time in the
critical area such that paying dividends is favourable to leaving the critical
area faster.

\section{Numerical Examples} \label{section:numerical}
In this section we illustrate our findings. Figures \ref{fig:2}, \ref{fig:3} and \ref{fig:1} show the value function for a various selection of parameters. The underlying strategy is illustrated by different shades: Dividends are paid at the maximal rate $u_0$ in the areas where the graph is grey and no dividends are paid in the black area. The black vertical line marks the critical drawdown level $d$.

\begin{figure}
\centering
\includegraphics[width=\textwidth]{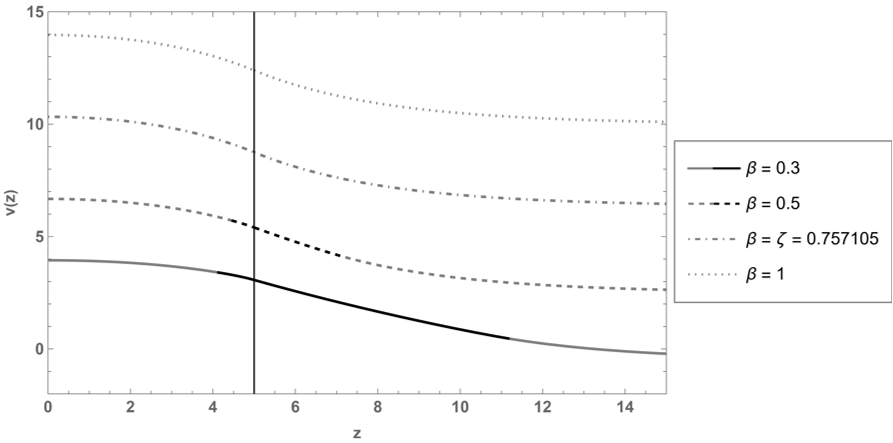}
\caption{Plot of $v(z)$ for parameters $\mu=3$, $\sigma=2$, $d=5$, $u_0=3$, $r=0.2$ and for various values of $\beta$.}
\label{fig:2}
\end{figure}
\begin{figure}
\centering
\includegraphics[width=\textwidth]{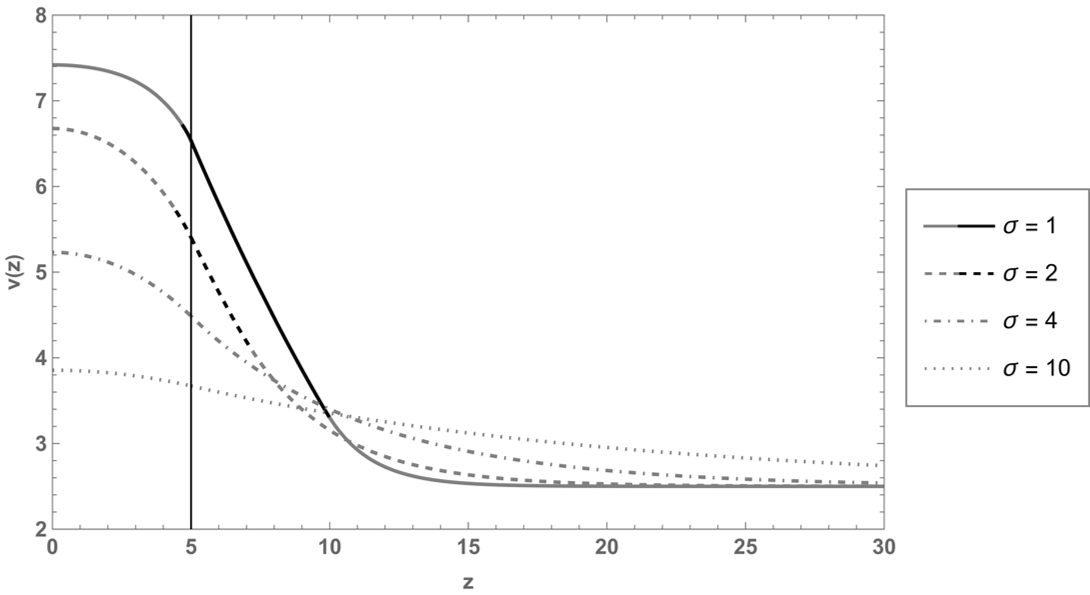}
\caption{Plot of $v(z)$ for parameters $\mu=3$, $u_0=3$, $d=5$, $\beta=0.5$, $r=0.2$ and for various values of $\sigma$.}
\label{fig:3}
\end{figure}
\begin{figure}
	\centering
	\includegraphics[width=\textwidth]{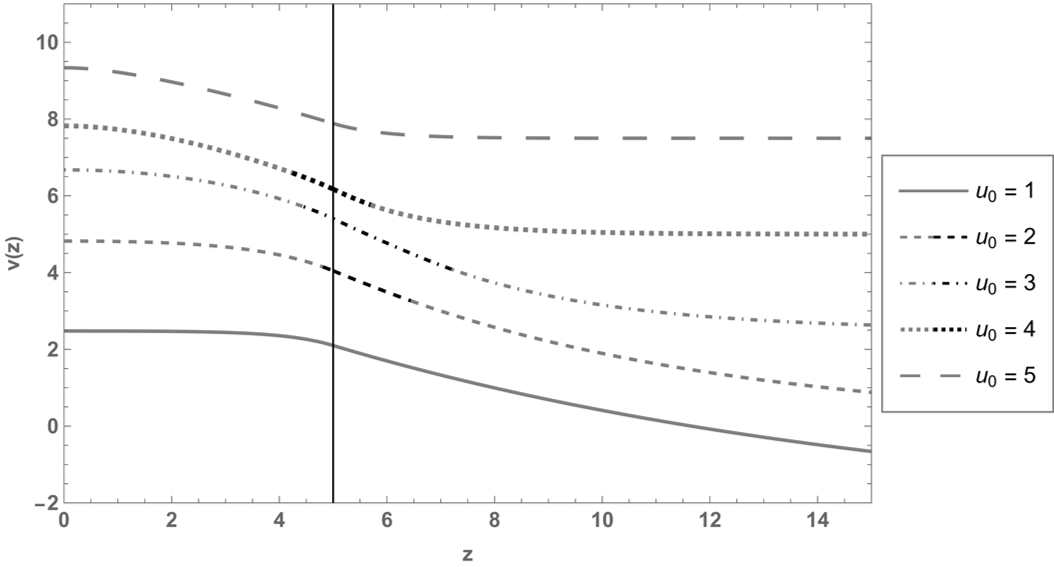}
	\caption{Plot of $v(z)$ for parameters $\mu=3$, $\sigma=2$, $d=5$, $\beta=0.5$, $r=0.2$ and for various values of $u_0$.}
	\label{fig:1}
\end{figure}
Unsurprisingly, we observe in Figure \ref{fig:2} and \ref{fig:1} that $v(z)$ is increasing in $\beta$ and $u_0$, respectively. Moreover, we can see in Figure \ref{fig:2} the convergence of $z_f$ and $z_g$ to $d$ as $\beta\uparrow\zeta$.

\noindent
In Figure \ref{fig:3} we can also see that $z_f$ and $z_g$ converge to $d$ as
$\sigma$ increases. This is because $\zeta$ is decreasing in $\sigma$ and
$\lim_{\sigma\to\infty} \zeta(\sigma) = 0$. For $\sigma$ large enough, we
therefore switch to the case $\beta>\zeta$. This is because the risk to reach
the critical area increases with $\sigma$ and cannot be prevented just by
not losing capital by paying dividends. Moreover, we observe that for larger
values of $\sigma$ the graph of $v(z)$ is flatter and converges slower towards
the limit value $(\beta u_0-1)/r$. This makes sense, since $\theta_2(u_0)$ is
decreasing in $\sigma$ and $v(z) \in ?O(\e^{-\theta_2(u_0)z})$. In particular,
we see in the examples considered in Figure \ref{fig:3}, that for large
initial drawdown $z$ the value $v(z)$ is larger if the volatility $\sigma$ is
higher. This can be interpreted in the following way: If the drawdown is
large, then we can return to smaller values through drift or through
fluctuations of the process. The size of the fluctuations depends on the size
of $\sigma$. If the volatility parameter $\sigma$ is large, the uncritical
area $[0,d)$ is ``easier'' to reach, even if we are further away from
it. Conversely, the area in which we can be pushed back to $[0,d)$ by
fluctuations is smaller when volatility is low. This explains why smaller
values of $\sigma$ lead to a larger decline in the value $v(z)$ near the
critical level $d$.

In Figure \ref{fig:1} we observe the symmetry of $\zeta$ in $u_0$, as mentioned in Remark \ref{rem:zeta}. That is, the size of the black area, where no dividends are paid, is maximal for $u_0=\mu$. We can explain this behaviour as follows: If $u_0\ll\mu$, then dividend payments only slightly reduce the drift. We therefore choose the constant strategy $U_t=u_0$, $t\geq 0$, as it involves little risk. On the other hand, if $u_0 \gg \mu$, then the value of dividend payments exceeds the costs of critical drawdown. We therefore accept that the drawdown process keeps growing and also choose the constant strategy $U_t=u_0$, $t\geq 0$.

\begin{figure}
\centering
\begin{tabular}{cc}
\includegraphics[width=0.68\textwidth]{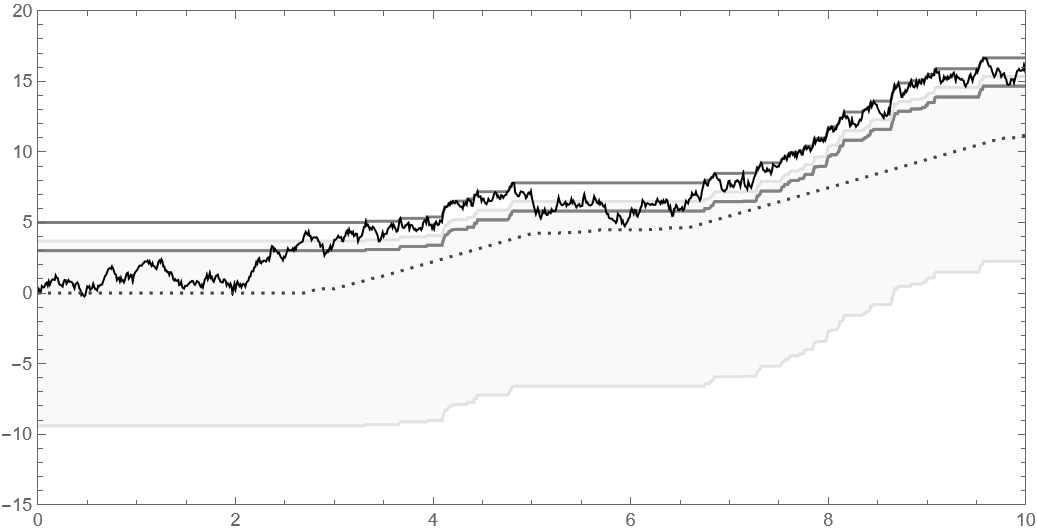} \\
\includegraphics[width=0.68\textwidth]{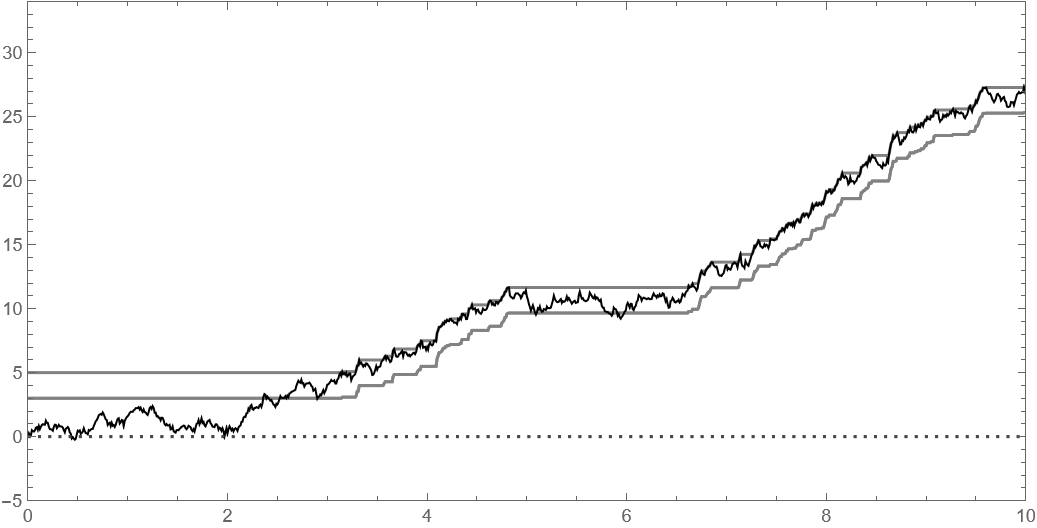}\\
\includegraphics[width=0.68\textwidth]{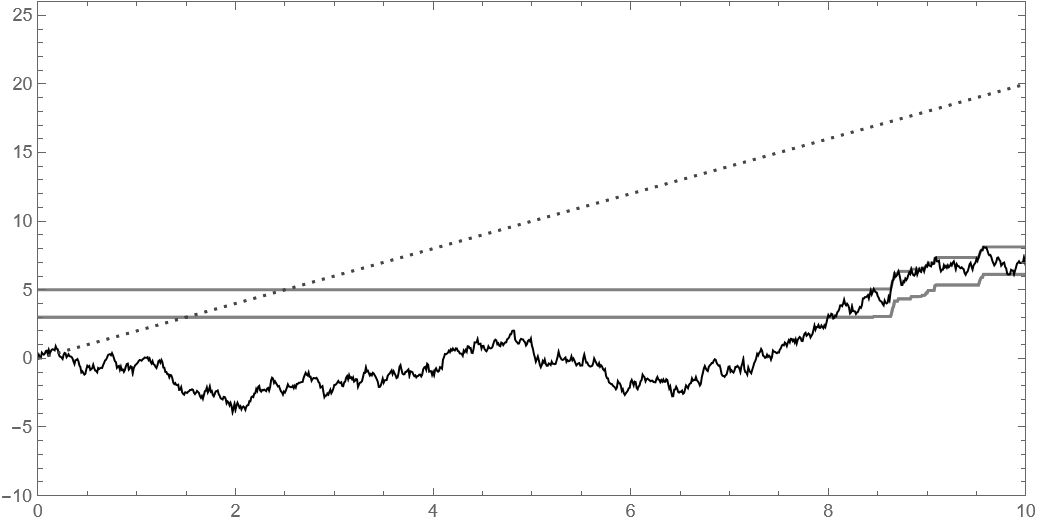}
\end{tabular}
\caption{Path simulations of the surplus process $X^{U}$ (black), the accumulated dividend payments $(D_t^U=\int_0^t U_s \id s)_{t\geq 0}$ (dotted) and the running maximum $M^U$ (grey) under the strategies $U=U^{*}$, $U=0$ and $U=u_0$ (from top to bottom) for $\mu>u_0=2$.}
\label{fig:simulation1}
\end{figure}

\begin{figure}
\centering
\begin{tabular}{c}
\includegraphics[width=0.68\textwidth]{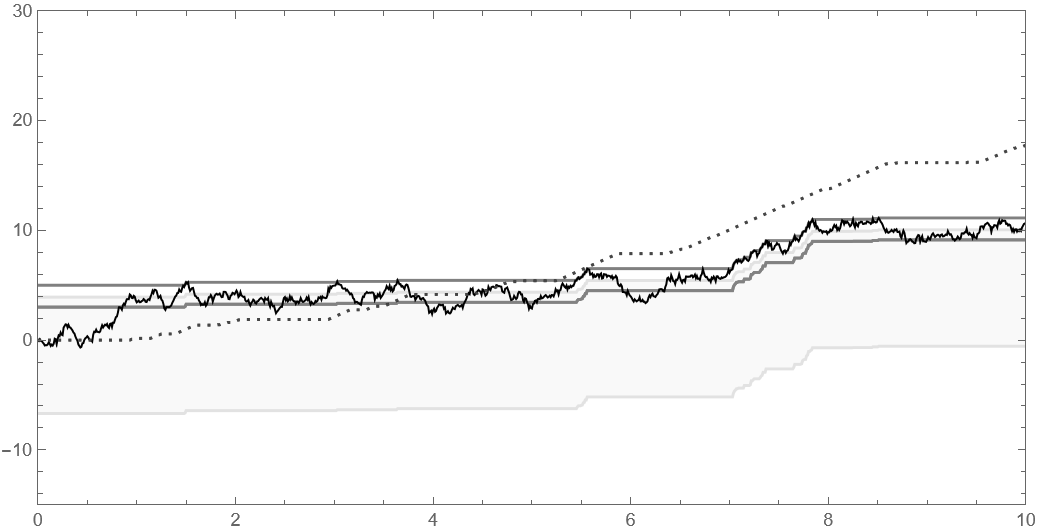} \\
\includegraphics[width=0.68\textwidth]{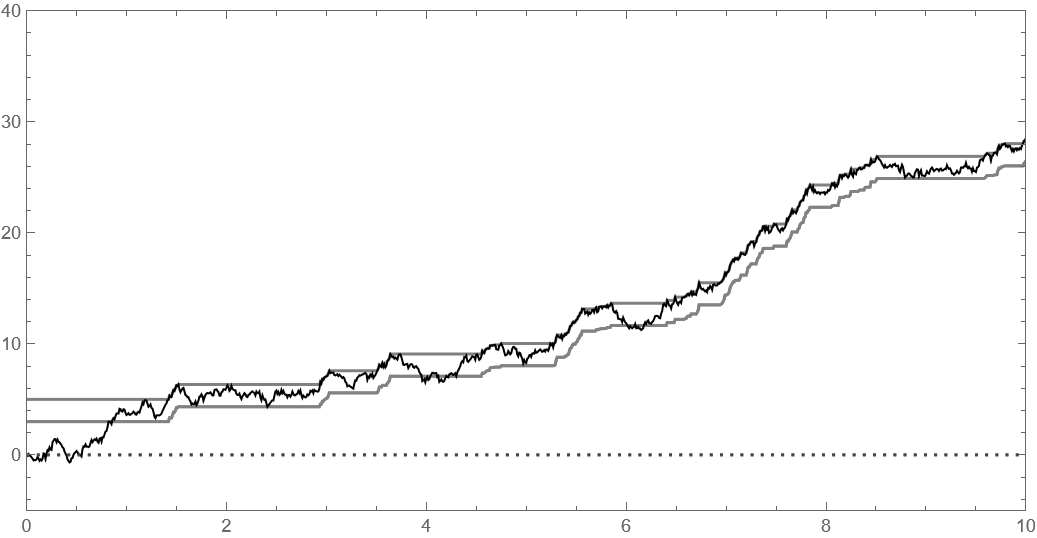}\\
\includegraphics[width=0.68\textwidth]{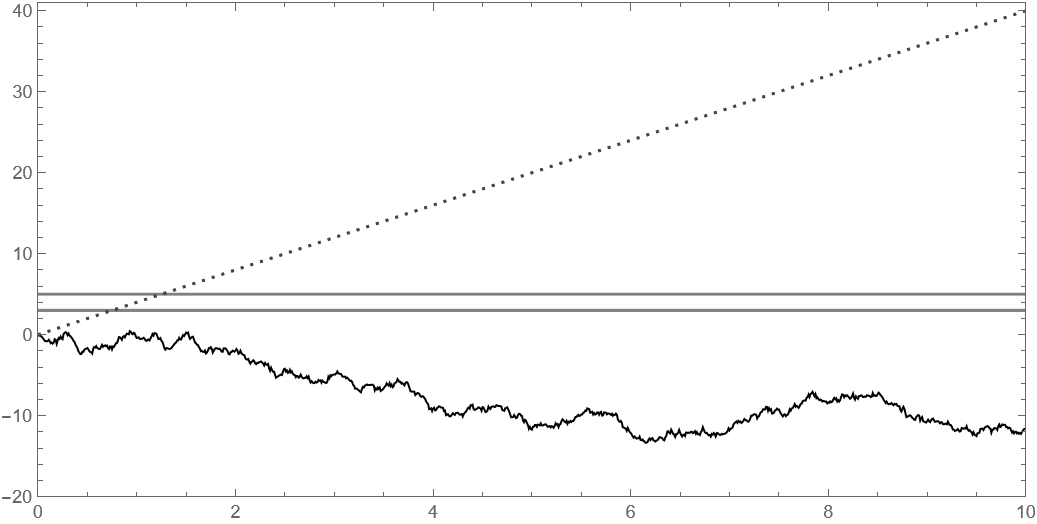}
\end{tabular}
\caption{Path simulations of the surplus process $X^{U}$ (black), the accumulated dividend payments $(D_t^U=\int_0^t U_s \id s)_{t\geq 0}$ (dotted) and the running maximum $M^U$ (grey) under the strategies $U=U^{*}$, $U=0$ and $U=u_0$ (from top to bottom) for $\mu<u_0=4$. }
\label{fig:simulation2}
\end{figure}
The behaviour of the surplus process is illustrated in Figures
\ref{fig:simulation1}, \ref{fig:simulation2} and \ref{fig:extreme} for chosen
parameters $\mu=3$, $\sigma=2$, $d=2$, $\beta=0.15$ and $r=0.2$. Figure
\ref{fig:extreme} illustrates a scenario for large initial drawdown and
$\mu<u_0$ under the optimal strategy $U^{*}$. In Figures \ref{fig:simulation1}
and \ref{fig:simulation2}, the same realisation of the underlying risk process
was used to simulate the surplus under the strategies $U=U^*$, $U=0$ and
$U=u_0$.  Here, the two grey lines mark the area between the running maximum and the critical drawdown level $d$. The
light grey area marks the interval $[z_f,z_g]$, where no dividends are paid. As
expected, we observe that drawdown control is best for the strategy $U=0$ and
worst for $U=u_0$. Under the optimal strategy $U^{*}$, we find that the two
objectives, dividend payments and non-critical drawdowns, are reconciled: the
process aims to stay in the uppermost area where dividends are paid and the
drawdown is non-critical. If the process reaches the critical area it does
not get far from the boundary $d$.

\begin{figure}
\centering
\includegraphics[width=0.68\textwidth]{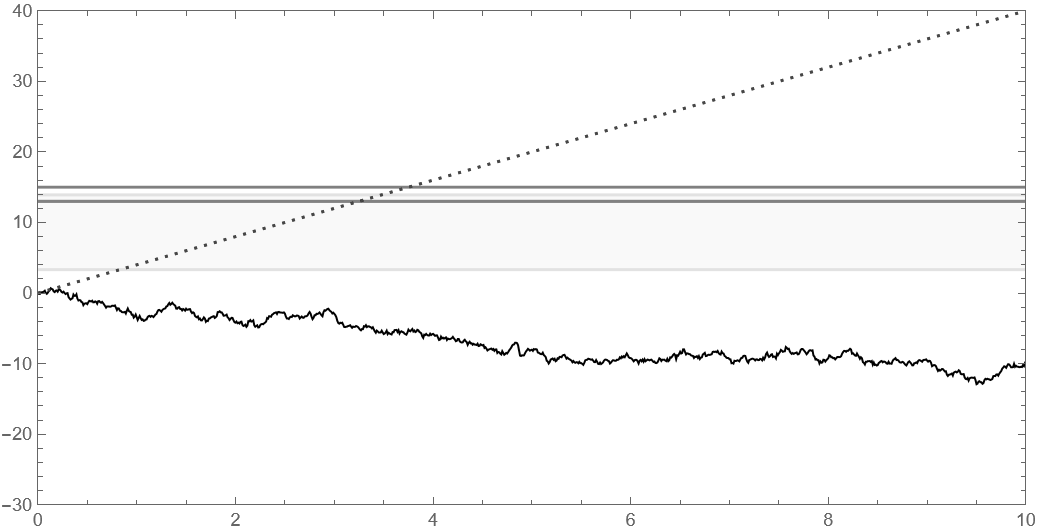}
\caption{Path simulations of the surplus process $X^{U}$ (black), the accumulated dividend payments $(D_t^U=\int_0^t U_s \id s)_{t\geq 0}$ (dotted) and the running maximum $M^U$ (grey) under the strategy $U=U^{*}$ for $\mu<u_0=4$.}
\label{fig:extreme}
\end{figure}
\section{Conclusion}
We considered a diffusion risk model where dividends are paid at rate $U_t\in[0,u_0]$. The goal was to maximise dividend payments while keeping drawdowns below a critical level $d$. We linked the optimisation problem to a Hamilton--Jacobi--Bellman equation. We showed that a solution exists and that the optimal dividend rate is either $0$ or $u_0$. We determined a critical value $\zeta$ and showed that the solution to the Hamilton--Jacobi--Bellman equation can only take two different forms, depending on the cases $\beta\geq\zeta$ and $\beta<\zeta$, respectively. In the case $\beta\geq\zeta$, we constantly pay dividends at the maximal rate $u_0$. If $\beta<\zeta$, then no dividends are paid if the drawdown is close to the critical boundary $d$. Finally, we gave some numerical examples to illustrate our findings.

The results of our analysis cause us to critically question our model. We have seen that it is optimal to pay dividends at the maximal rate $u_0$ if the drawdown is large. In the case $\mu< u_0$, where the drift is smaller than the dividend rate, this would lead to unbounded growth of the drawdown. In reality, this is unacceptable as we cannot accumulate unlimited debt. One option would be to stop at the time of ruin, as done in the original dividend problem (compare \cite{Schmidli}).  However, bankruptcy rarely occurs in reality. Another way would therefore be to choose a penalty function that takes the size of the drawdown into account, as described by Brinker \cite{LVB}. The results on optimal dividends with penalty payments of Vierk\"otter \cite{Vierkoetter} could serve as an inspiration here.

\bibliographystyle{abbrv}
\bibliography{Literatur}
\end{document}